\newcommand{\C}{\mathbb C}
\renewcommand{\P}{\mathbb P}
\newcommand{\R}{\mathbb R}
\newcommand{\1}{\mathbbm 1}
\newcommand{\half}{\mbox{$\frac 1 2$}}
\renewcommand{\Im}{\operatorname{Im}}
\renewcommand{\Re}{\operatorname{Re}}
\renewcommand{\Re}{\operatorname{Re}}
\newtheorem{theorem}{Theorem}[section]
\newtheorem{lemma}[theorem]{Lemma}
\newtheorem{proposition}[theorem]{Proposition}
\theoremstyle{definition}
\newtheorem{definition}[theorem]{Definition}
\theoremstyle{remark}
\newtheorem{remark}[theorem]{Remark}
\newcommand{\diag}{\operatorname{diag}}
\newcommand{\Cov}{\operatorname{Cov}}
\title{Non-reversible Metropolis-Hastings}
\author{Joris Bierkens}
\thanks{This research has received support from the  European  Union  project \# FP7-ICT-270327 (Complacs), and the EPSRC under the CRiSM grant: EP/D002060/1.}
\address{Joris Bierkens, University of Warwick, Department of Statistics, Coventry CV4 7AL United Kingdom, j.bierkens@warwick.ac.uk}
\begin{document}
\begin{abstract}
The classical Metropolis-Hastings (MH) algorithm can be extended to generate non-reversible Markov chains. This is achieved by means of a modification of the acceptance probability, using the notion of vorticity matrix. The resulting Markov chain is non-reversible. Results from the literature on asymptotic variance, large deviations theory and mixing time are mentioned, and in the case of a large deviations result, adapted, to explain how non-reversible Markov chains have favorable properties in these respects.

We provide an application of NRMH in a continuous setting by developing the necessary theory and applying, as first examples, the theory to Gaussian distributions in three and nine dimensions. The empirical autocorrelation and estimated asymptotic variance for NRMH applied to these examples show significant improvement compared to MH with identical stepsize.
\end{abstract}

\maketitle

AMS Subject Classification: 65C40, 60J20

Keywords and keyphrases: Markov Chain Monte Carlo; MCMC; Metropolis-Hastings; non-reversible Markov processes; asymptotic variance; large deviations; Langevin sampling

\section{Introduction}
The Metropolis-Hastings (MH) algorithm \cite{Metropolis1953,Hastings1970} is a Markov chain Monte-Carlo (MCMC) method of profound importance to many fields of mathematics such as Bayesian inference and statistical mechanics \cite{Diaconis1998,Diaconis2008,Levin2009}. The applicability of MH to a particular computational problem depends on the efficiency of the Markov chain that is generated by the algorithm. The chains generated by the classical Metropolis-Hastings algorithm are \emph{reversible}, or, in other words, satisfy \emph{detailed balance}; in fact, this reversibility is instrumental in showing that the resulting chains have the right invariant probability distribution.

However, non-reversible Markov chains may have better properties in terms of mixing behavior or asymptotic variance. This can be shown experimentally in special cases \cite{Suwa2010,TuritsynChertkovVucelja2011, Vucelja2014}, theoretically in special cases \cite{DiaconisHolmesNeal2000,Neal2004}, and in fact, also in general \cite{Sun2010,Chen2013}, with respect to asymptotic variance. See also \cite{Rey-Bellet2014} for improved asymptotic variance of non-reversible diffusion processes on compact manifolds. 

There exist two basic approaches to the construction of non-reversible chains from reversible chains: one can `lift' the Markov chain to a larger state space \cite{DiaconisHolmesNeal2000,Neal2004,TuritsynChertkovVucelja2011,Vucelja2014}, or one can introduce non-reversibility without altering the state space \cite{Sun2010}. In continuous spaces, the \emph{hybrid} or \emph{Hamiltonian Monte Carlo} \cite{Horowitz1991, neal2011mcmc} is closely related to the lifting approach in discrete spaces.
Other noteworthy publications on non-reversible Markov chains are \cite{Wilmer1999,GeyerMira2000}. 

In this paper we consider the second type of creating non-reversibility, i.e. without augmenting the state space. In discrete spaces this can, in principle, be achieved by changing transition probabilities (see Remark~\ref{rem:adding_gamma}). However this may be have computational disadvantages, since it requires access to all transition probabilities. Furthermore, there is no such analogue in continuous spaces (crudely speaking, because all transition probabilities to specific states are zero). 

To remedy these issues, in this paper MH is extended to `non-reversible Metropolis-Hastings' (NRMH) which allows for non-reversible transitions. The main idea of this paper is to modify the acceptance ratio, which is further  discussed in Section~\ref{sec:non-reversible-MH}. For pedagogical purposes, the theory is first developed for discrete state spaces. It is shown how the acceptance probability of MH, can be adjusted so that the resulting chain in NRMH has a specified `vorticity', and therefore, will be non-reversible. Any Markov chain satisfying a symmetric structure condition can be constructed by NRMH, which establishes the generality of the algorithm.
Theoretical advantages of finite state space non-reversible chains in terms of improved asymptotic variance and large deviations estimates are briefly mentioned in Section~\ref{sec:advantages}. In particular we recall a result from \cite{Sun2010} that adding non-reversibility decreases asymptotic variance. Also we present a variation on a result by \cite{Rey-Bellet2014} on large deviations from the invariant distribution.

As mentioned above, for continuous state spaces it was so far not clear how general non-reversible Markov chains (i.e. discrete time, for arbitrary target density) could be constructed. 
One of the main advantages of NRMH is that it also applies in the setting of continuous state spaces, and thus provides a partial solution to this problem, as will be discussed and verified experimentally in Section~\ref{sec:continuous_spaces}. In particular we implement a non-reversible version of the Metropolis Adjusted Langevin Algorithm (MALA) for Gaussian multivariate target distributions.
Finally, conclusions and directions of further research are discussed in Section~\ref{sec:discussion}.

\subsection{Notation}
We will consider both finite and infinite-dimensional vectors and matrices. The constant vector with all elements equal to $1$ will be denoted by $\1$; the dimensionality of $\1$ should always be clear from the context. Similarly the identity matrix of any dimension will be denoted by $I$. For sets $V, S$, with $V \subset S$ the indicator function of $V$ is denoted by $\1_V : S \rightarrow \R$. The transpose of a matrix $A$ is denoted by $A'$. The Euclidean vector norm on $\R^n$, as well as its induced matrix norm, will be denoted by $\|\cdot\|$.
For a matrix $A \in \R^n$, the spectrum is denoted by $\sigma(A)$. The spectral bound and spectral radius of $A$ are denoted by $\mathfrak{s}(A) = \max\{ \Re \lambda: \lambda \in \sigma(A)\}$ and $\mathfrak{r}(A) = \max\{ |\lambda| :  \lambda \in \sigma(A)\}$, respectively.

\section{Metropolis-Hastings generalized to obtain non-reversible chains}
\label{sec:non-reversible-MH}
As a preliminary to non-reversible Metropolis-Hastings, we require the notion of \emph{vorticity matrix}, which is introduced in Section~\ref{sec:vorticity}. The classical Metropolis-Hastings algorithm, discussed in Section~\ref{sec:MH}, is extended using the notion of vorticity matrix to a non-reversible version in Section~\ref{sec:non-reversible_extension}.

\subsection{Non-reversible Markov chains and vorticity}
\label{sec:vorticity}
Let $P = (P(x,y))$ denote a matrix of transition probabilities of a Markov chain on a finite or countable state space $S$.
A \emph{distribution} on $S$ is a vector with positive elements in $L^1(S)$, and is not necessarily normalized, i.e. it is not necessarily the case that $\sum_{x \in S} \pi(x) = 1$. If $\pi$ is a distribution such that $\sum_{x \in S} \pi(x) = 1$, then we call $\pi$ a \emph{probability distribution}. We will always assume that $\pi(x) > 0$ for all $x \in S$.
A (probability) distribution $\pi$ on $S$ is said to be an \emph{invariant (probability) distribution} of $P$ if
$\pi' P = \pi'$, i.e. $\sum_{x \in S} \pi(x) P(x,y) = \pi(y)$ for all $y \in S$. A distribution $\pi$ on $S$ is said to satisfy the \emph{detailed balance condition} with respect to $P$ if $\diag(\pi) P = P' \diag(\pi)$, i.e. $\pi(x) P(x,y) = P(y,x) \pi(y)$ for all $x, y \in S$. If there exists a distribution $\pi$ on $S$ that satisfies detailed balance with respect to $P$, then $P$ is said to be \emph{reversible}. As is well known, and straightforward to check, if $\pi$ satisfies detailed balance with respect to $P$, then $\pi$ is invariant for $P$. A chain which does not satisfy the detailed balance condition with respect to its invariant distribution is called \emph{non-reversible}. In a certain sense, this is a misnomer: we may obtain a time reversed Markov chain $\widehat P$ by defining $\widehat P(x,y) := \frac{ \pi(y) P(y,x)}{\pi(x)}$. In fact, $\widehat P$ is the adjoint of $P$ with respect to the inner product $(\cdot,\cdot)_{\pi}$, defined by $(f,g)_{\pi} = \sum_{x \in S} f(x) g(x) \pi(x)$. For further background material on Markov chains the reader is referred to \cite{Levin2009}.

Consider a non-reversible Markov chain $P$. Let $K = \half ( P + \widehat P)$. Then $K$ is a reversible Markov chain with invariant distribution $\pi$. We will also define a vorticity matrix $\Gamma$ as (essentially) the skew-symmetric part of $P$:
\begin{equation} \label{eq:Gamma} \Gamma(x, y) := \pi(x) P(x,y) - \pi(y) P(y,x), \quad x, y \in S,\end{equation}
or in matrix notation
\[ \Gamma = \diag(\pi) P - P' \diag(\pi).\]
One can think of $\Gamma$ as (a transformation of) the skew-symmetric part of $P$: $\Gamma = \diag(\pi) (P - \widehat P)$.
The following simple observations are fundamental to this paper.
\begin{lemma} 
\label{lem:vorticity_observations}
Let $P$ be the transition matrix of a Markov chain on $S$ and let $\pi$ be a distribution on $S$. Let $\Gamma$ be defined by~\eqref{eq:Gamma}. Then:
\begin{itemize}
 \item[(i)] $\Gamma$ is skew-symmetric, i.e. $\Gamma = - \Gamma'$;
 \item[(ii)] $\pi$ satisfies detailed balance with respect to $P$ if and only if $\Gamma = 0$;
 \item[(iii)] $\pi$ is invariant for $P$ if and only if $\Gamma \1 = 0$.
\end{itemize}
\end{lemma}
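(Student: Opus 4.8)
The plan is to verify each of the three claims directly from the definition $\Gamma(x,y) = \pi(x) P(x,y) - \pi(y) P(y,x)$, since all three reduce to elementary manipulations once the definition is unfolded. None of these require deep machinery; the work is purely in matching the defining formula to the stated characterizations.

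For (i), I would simply swap the roles of $x$ and $y$ in the defining expression: $\Gamma(y,x) = \pi(y) P(y,x) - \pi(x) P(x,y) = -\Gamma(x,y)$, which is skew-symmetry entrywise. Equivalently, in matrix form, transposing $\Gamma = \diag(\pi) P - P' \diag(\pi)$ and using that $\diag(\pi)' = \diag(\pi)$ gives $\Gamma' = P' \diag(\pi) - \diag(\pi) P = -\Gamma$. For (ii), I would observe that the detailed balance condition $\pi(x) P(x,y) = \pi(y) P(y,x)$ for all $x,y$ is, term by term, exactly the statement $\Gamma(x,y) = 0$ for all $x,y$; so detailed balance holds if and only if $\Gamma = 0$. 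This is immediate from the definition and requires no further argument.

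The only part with any content is (iii). Here I would compute the $y$-th entry of the row-sum vector $\Gamma \1$, using that $(\Gamma \1)(y) = \sum_{x \in S} \Gamma(x,y)$—or rather, being careful with the convention, I would work with $\Gamma' \1$ or the appropriate row sum so that the invariance condition $\pi' P = \pi'$ appears. Summing the defining expression over one index gives
\begin{equation*}
\sum_{x \in S} \Gamma(x,y) = \sum_{x \in S} \pi(x) P(x,y) - \pi(y) \sum_{x \in S} P(y,x) = (\pi' P)(y) - \pi(y),
\end{equation*}
where the second sum collapses because $\sum_x P(y,x) = 1$ (the rows of $P$ are probability vectors). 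Thus the $y$-th component of this row-sum vector equals $(\pi' P)(y) - \pi(y)$, which vanishes for all $y$ precisely when $\pi' P = \pi'$, i.e. when $\pi$ is invariant. I would then just note that $\Gamma \1 = 0$ (up to the transpose bookkeeping) encodes exactly this vanishing.

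The main obstacle—really the only thing to get right—is the indexing and transpose convention in (iii): one must confirm that the relevant contraction of $\Gamma$ against $\1$ produces the row sums that, combined with stochasticity of $P$, reproduce $\pi' P - \pi'$, and that the claimed equation $\Gamma \1 = 0$ matches this (since $\Gamma$ is skew-symmetric, $\Gamma \1 = 0$ and $\1' \Gamma = 0$ are equivalent, which makes the bookkeeping harmless). I would state this equivalence explicitly to keep the direction of the sum unambiguous, but beyond that the proof is a short chain of definitional substitutions with no genuine difficulty.
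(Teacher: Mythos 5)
Your proof is correct and follows essentially the same route as the paper: (i) and (ii) read off directly from the definition, and (iii) is the same one-line computation using stochasticity of $P$, the only cosmetic difference being that you sum over the first index (column sums) and then invoke skew-symmetry, whereas the paper computes $(\Gamma \1)(x) = \sum_{y} \Gamma(x,y) = \pi(x) - \sum_y \pi(y)P(y,x)$ directly.
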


\begin{proof}
(i) and (ii) are immediate. As for (iii), note that
\[ (\Gamma \1)(x) = \sum_{y \in S} \left[ \pi(x) P(x, y) - \pi(y) P(y,x)\right] = \pi(x) - \sum_{y \in S} \pi(y) P(y,x),\]
which is zero for all $x$ if and only if $\pi$ is invariant for $P$.
\end{proof}

In light of Lemma~\ref{lem:vorticity_observations}, a matrix $\Gamma \in \R^{n \times n}$ which is skew-symmetric and satisfies $\Gamma \1 = 0$ is called a \emph{vorticity matrix}. If $\Gamma$ is related by~\eqref{eq:Gamma} to a Markov chain $P$ with invariant distribution $\pi$, it is called the \emph{vorticity of $P$ and $\pi$}. It will be a key ingredient in the construction of a non-reversible version of Metropolis-Hastings. 

\begin{remark}
\label{rem:adding_gamma}
A direct way of constructing a non-reversible chain $P$ from a reversible chain $K$ and a vorticity matrix $\Gamma$, is by letting $P(x,y) = K(x,y) + \frac 1 {2 \pi(x)}\Gamma(x,y)$, provided that $P$ is a probability matrix (i.e. has nonnegative entries). This is discussed in e.g. \cite{Sun2010}.
In order to make a transition from a state $x$, one has to compute entries of $K(x,\cdot)$ and $\Gamma(x,\cdot)$ to determine the transition probabilities. This approach has no alternative in uncountable state spaces. This is the main reason for wishing to develop a method that does not depend on the construction mentioned in this remark.
\end{remark}

\subsection{Metropolis-Hastings}
\label{sec:MH}
In the Metropolis-Hastings (MH) algorithm a reversible Markov chain $P_0$ with a given invariant distribution $\pi$ is constructed. We will assume, mainly for simplicity, throughout this paper that $\pi(x) > 0$ for all $x \in S$. As an ingredient for the construction of $P_0$, a Markov chain $Q$ is used, satisfying the \emph{symmetric structure} condition
\begin{equation} \label{eq:symmetric_structure} Q (y,x) = 0 \quad \mbox{whenever} \quad Q(x,y) = 0, \quad x, y \in S.\end{equation} 
In other words, whenever a transition from $x$ to $y$ has positive probability, the reverse probability also has positive probability. 
The \emph{Hastings ratio} $R_0(x,y)$ is defined as
\begin{equation} \label{eq:hastings-ratio-MH}
 R_0(x,y) := \left\{ \begin{array}{ll} \frac{\pi(y) Q(y,x)}{\pi(x) Q(x,y)}, \quad & \mbox{for all} \ x, y \in S \ \mbox{for which $\pi(x) Q(x,y) \neq 0$}, \\
                      1 \quad & \mbox{otherwise.}
                     \end{array} \right.
\end{equation}
With this definition of $R_0$, acceptance probabilities are defined as
\begin{equation} \label{eq:acceptance-MH} 
A_0(x,y) := \min \left( 1,  R_0(x,y) \right),
\end{equation}
and transition probabilities $P_0(x,y)$ are defined by
\begin{equation} \label{eq:transition-MH}
P_0(x,y) := \left\{\begin{array}{ll} Q(x,y) A_0(x,y), \quad & x \neq y, \\
                    Q(x,x) + \sum_{z \neq x} Q(x,z) (1 - A_0(x,z)), \quad & x = y.
                   \end{array} \right.
\end{equation}
It is a straightforward exercise to show that the chain $P_0$ has $\pi$ as its invariant distribution. An important step is the observation that $R_0(x,y) \leq 1$ if and only if $R_0(y,x) \geq 1$, which will be a recurring phenomenon in the sequel.

\subsection{Non-reversible Metropolis-Hastings}
\label{sec:non-reversible_extension}

We will now discuss how this framework can be extended to construct Markov chains that are, in general, non-reversible. Let $\Gamma \in \R^{n \times n}$ be a vorticity matrix, and let $Q$ be the transition matrix of a Markov chain, satisfying~\eqref{eq:symmetric_structure}. Again, $\pi : S \rightarrow (0,\infty)$ is some distribution that is not necessarily normalized and has only positive entries.

We define for $x, y \in S$ the \emph{non-reversible Hastings ratio} as
\begin{equation}
\label{eq:hastings-ratio}
R_{\Gamma}(x,y) := \left\{ \begin{array}{ll} \frac{\Gamma(x,y) + \pi(y) Q(y,x)}{\pi(x) Q(x,y)}, \quad & \mbox{if} \ \pi(x) Q(x,y) \neq 0, \\
                            1 \quad &\mbox{otherwise,}
                           \end{array} \right.
\end{equation}
and let, analogously to MH, the acceptance probabilities $A_{\Gamma}$ be
\begin{equation} \label{eq:acceptance-Gamma}
A_{\Gamma}(x,y) := \min \left( 1, R_{\Gamma}(x,y) \right).
\end{equation}
Entries of $\Gamma$ can be negative. In order to avoid the situation that $A_{\Gamma}$ becomes negative, we will explicitly constrain vorticity matrix $\Gamma$ to satisfy
\begin{equation} \label{eq:vorticity_bound} \Gamma(x,y) \geq - \pi(y) Q(y,x) \quad \mbox{for all} \ x, y \in S.\end{equation}
Note that~\eqref{eq:vorticity_bound} implies, by skew-symmetry of $\Gamma$, that
\[ - \pi(y) Q(y,x) \leq \Gamma(x,y) \leq \pi(x) Q(x,y) \quad \mbox{for all} \ x, y \in S.\]
In particular, by the symmetric structure condition~\eqref{eq:symmetric_structure}, $\Gamma$ should have zeroes wherever $Q$ has zeroes.
As with Metropolis-Hastings, the transition probabilities $P_{\Gamma}(x,y)$ are defined by
\begin{equation}
\label{eq:transition-Gamma}
P_{\Gamma}(x,y) := \left\{\begin{array}{ll} Q(x,y) A_{\Gamma}(x,y), \quad & x \neq y, \\
                    Q(x,x) +\sum_{z \neq x} Q(x,z) (1 - A_{\Gamma}(x,z)), \quad & x = y.
                   \end{array} \right.
\end{equation}
Note that indeed $P_{\Gamma}$ is a matrix of transition probabilities.
For $\Gamma = 0$, $A_{\Gamma}$ and therefore $P_{\Gamma}$ reduce to $A_0$ and $P_0$, so that the chosen notation is consistent.

In order to check that the proposed Markov chain has $\pi$ as its invariant density, we need to verify that $\Gamma$, $\pi$ and $P_{\Gamma}$ are related through~\eqref{eq:Gamma}. As a crucial step, we employ the following lemma, in analogy with Metropolis-Hastings.

\begin{lemma}
\label{lem:hastings-ratio}
Let $\Gamma$ be a vorticity matrix, $Q$ a matrix of transition probabilities satisfying~\eqref{eq:symmetric_structure}, $\pi$ a distribution that is nowhere zero, such that~\eqref{eq:vorticity_bound} holds. Let $R_{\Gamma}$ be as above. Then 
$R_{\Gamma}(y,x) > 1$  if and only if $R_{\Gamma}(x,y) < 1$
for any $x,y \in S$ for which $Q(x,y) \neq 0$.
\end{lemma}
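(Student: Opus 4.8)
The plan is to reduce the biconditional to a single elementary inequality. First I would observe that the symmetric structure condition~\eqref{eq:symmetric_structure}, together with the standing assumption that $\pi$ is nowhere zero, guarantees that whenever $Q(x,y) \neq 0$ we also have $Q(y,x) \neq 0$, and hence both denominators $\pi(x) Q(x,y)$ and $\pi(y) Q(y,x)$ are strictly positive. This is the point I expect to require the most care: it ensures that \emph{both} $R_{\Gamma}(x,y)$ and $R_{\Gamma}(y,x)$ are computed via the nontrivial branch of~\eqref{eq:hastings-ratio}, rather than defaulting to the value $1$, so that the claimed strict inequalities are meaningful rather than vacuous.

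Next I would introduce the abbreviations $a := \pi(x) Q(x,y)$, $b := \pi(y) Q(y,x)$, and $g := \Gamma(x,y)$, where $a, b > 0$ by the previous step. Skew-symmetry of $\Gamma$ (Lemma~\ref{lem:vorticity_observations}(i)) gives $\Gamma(y,x) = -g$, so that~\eqref{eq:hastings-ratio} reads
\[ R_{\Gamma}(x,y) = \frac{g + b}{a}, \qquad R_{\Gamma}(y,x) = \frac{a - g}{b}. \]

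The remaining step is pure algebra. Since $a > 0$, the inequality $R_{\Gamma}(x,y) < 1$ is equivalent to $g + b < a$, i.e.\ to $g < a - b$. Since $b > 0$, the inequality $R_{\Gamma}(y,x) > 1$ is equivalent to $a - g > b$, i.e.\ again to $g < a - b$. As both strict inequalities are equivalent to the single condition $g < a - b$, they are equivalent to one another, which is exactly the assertion. I note that the vorticity bound~\eqref{eq:vorticity_bound} is not needed for this equivalence --- it serves only to keep the ratios nonnegative so that the acceptance probabilities in~\eqref{eq:acceptance-Gamma} are well defined --- so the argument rests entirely on the positivity of $a$ and $b$ and the skew-symmetry of $\Gamma$.
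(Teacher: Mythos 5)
Your proof is correct and follows essentially the same route as the paper's: both arguments reduce the claim to rearranging the inequality $\Gamma(x,y) + \pi(y)Q(y,x) < \pi(x)Q(x,y)$ via skew-symmetry of $\Gamma$. Your version merely makes explicit (and usefully so) that the symmetric structure condition forces both denominators to be positive, so that both ratios are computed from the nontrivial branch of~\eqref{eq:hastings-ratio}.
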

\begin{proof}
Suppose $R_{\Gamma}(x,y) < 1$, i.e.  $\Gamma(x,y) + \pi(y) Q(y,x) < \pi(x) Q(x,y)$.
Then
\[ \pi(x) Q(x,y) + \Gamma(y,x) = \pi(x) Q(x,y) - \Gamma(x,y) > \pi(y) Q(y,x),\]
so that $R_{\Gamma}(y,x) > 1$, using that $\Gamma$ is skew-symmetric. The converse direction is analogous.
\end{proof}

Using the previous lemma, it is now straightforward to show that $\Gamma$ is the vorticity matrix of $(P_{\Gamma},\pi)$.

\begin{lemma}
\label{lem:P_Gamma_has_vorticity_Gamma}
Let $Q$ be a Markov chain, $\Gamma$ a vorticity matrix, and $\pi$ a distribution on $S$, such that~\eqref{eq:symmetric_structure} and \eqref{eq:vorticity_bound} are satisfied. Let $P_{\Gamma}$ be defined through~\eqref{eq:hastings-ratio}, \eqref{eq:acceptance-Gamma} and \eqref{eq:transition-Gamma}. Then~\eqref{eq:Gamma} holds for $(P_{\Gamma}, \pi)$, i.e.
\[ \Gamma(x, y) = \pi(x) P_{\Gamma}(x,y) - \pi(y) P_{\Gamma}(y,x), \quad \mbox{for all} \ x, y \in S.\]
\end{lemma}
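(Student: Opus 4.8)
The plan is to verify the identity entrywise, exploiting the fact that both sides are skew-symmetric in $(x,y)$ so that it suffices to treat one orientation of each pair. First I would dispose of the diagonal: for $x = y$ the left-hand side vanishes since $\Gamma$ is skew-symmetric, and the right-hand side is $\pi(x) P_{\Gamma}(x,x) - \pi(x) P_{\Gamma}(x,x) = 0$, so the identity holds trivially. For off-diagonal pairs, observe that interchanging $x$ and $y$ negates both sides---the left-hand side by skew-symmetry of $\Gamma$, the right-hand side by inspection---so I may assume without loss of generality that $R_{\Gamma}(x,y) \leq 1$.

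Next I would handle the degenerate case $Q(x,y) = 0$. By the symmetric structure condition~\eqref{eq:symmetric_structure} this forces $Q(y,x) = 0$, whence $P_{\Gamma}(x,y) = P_{\Gamma}(y,x) = 0$ from the off-diagonal formula in~\eqref{eq:transition-Gamma}. Moreover the chain of inequalities following~\eqref{eq:vorticity_bound}, namely $-\pi(y) Q(y,x) \leq \Gamma(x,y) \leq \pi(x) Q(x,y)$, collapses to $\Gamma(x,y) = 0$. Thus both sides are zero and the identity holds.

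The main case is $Q(x,y) \neq 0$, and hence $Q(y,x) \neq 0$. Here the assumed orientation $R_{\Gamma}(x,y) \leq 1$ gives $A_{\Gamma}(x,y) = R_{\Gamma}(x,y)$ directly from~\eqref{eq:acceptance-Gamma}. The key step---the only place where real content enters---is to argue that the reverse acceptance probability saturates, $A_{\Gamma}(y,x) = 1$. This is exactly what Lemma~\ref{lem:hastings-ratio} supplies: if $R_{\Gamma}(x,y) < 1$ then $R_{\Gamma}(y,x) > 1$, so $A_{\Gamma}(y,x) = 1$; and in the boundary case $R_{\Gamma}(x,y) = 1$ a one-line computation using $\Gamma(y,x) = -\Gamma(x,y)$ shows $R_{\Gamma}(y,x) = 1$ as well, so again $A_{\Gamma}(y,x) = 1$. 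With these two values in hand, I would substitute the definitions~\eqref{eq:hastings-ratio} and~\eqref{eq:transition-Gamma}: the term $\pi(x) P_{\Gamma}(x,y) = \pi(x) Q(x,y) R_{\Gamma}(x,y)$ telescopes to $\Gamma(x,y) + \pi(y) Q(y,x)$, because the $\pi(x) Q(x,y)$ factors cancel, while $\pi(y) P_{\Gamma}(y,x) = \pi(y) Q(y,x) \cdot 1$. Subtracting leaves precisely $\Gamma(x,y)$.

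I expect the main obstacle to be conceptual rather than computational: correctly organizing the case split so that exactly one of the two acceptance probabilities is the binding minimum. Lemma~\ref{lem:hastings-ratio} is the device that guarantees this dichotomy, and the only delicate point is confirming that the equality case $R_{\Gamma}(x,y) = 1$ is consistent (both ratios equal one), which the skew-symmetry of $\Gamma$ settles cleanly. Once the acceptance probabilities are pinned down, the verification reduces to an immediate cancellation.
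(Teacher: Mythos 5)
Your proof is correct and follows essentially the same route as the paper: reduce to one orientation, use Lemma~\ref{lem:hastings-ratio} to force $A_{\Gamma}(y,x)=1$ when $A_{\Gamma}(x,y)=R_{\Gamma}(x,y)$, and let the $\pi(x)Q(x,y)$ factors cancel. You are in fact somewhat more careful than the paper, which treats only the case $R_{\Gamma}(x,y)<1$ and dismisses the diagonal, the $Q(x,y)=0$ case, and the equality case $R_{\Gamma}(x,y)=1$ as ``analogous''---all of which you verify explicitly.
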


\begin{proof}
If $x$, $y$ are such that $R_{\Gamma}(x,y) < 1$. Then $A_{\Gamma}(x,y) = R_{\Gamma}(x,y)$, and $A_{\Gamma}(y,x) = 1$. Therefore
\begin{align*} \pi(x) P_{\Gamma}(x,y) & = \pi(x) Q(x,y) A_{\Gamma}(x,y) = \pi(x) Q(x,y) \left( \frac{\Gamma(x,y) + \pi(y) Q(y,x)}{\pi(x) Q(x,y)} \right)\\
& = \Gamma(x,y) + \pi(y) Q(y,x) = \Gamma(x,y) + \pi(y) A_{\Gamma}(y,x) Q(y,x) = \Gamma(x,y) + \pi(y) P_{\Gamma}(y,x),\end{align*}
using Lemma~\ref{lem:hastings-ratio}.
so that~\eqref{eq:Gamma} is satisfied for such $x$ and $y$.
The cases in which $R_{\Gamma}(x,y) = 1$ or $> 1$ are analogous.
\end{proof}

Finally, since by the assumption that $\Gamma$ is a vorticity matrix, $\Gamma \1 = 0$, and hence Lemma~\ref{lem:vorticity_observations} (iii) gives that $\pi$ is invariant for $P_{\Gamma}$. We have obtained our main result.

\begin{theorem}
Let $Q$ be a Markov chain, $\Gamma$ a vorticity matrix, and $\pi$ a distribution on $S$ that is everywhere positive, such that~\eqref{eq:symmetric_structure} and \eqref{eq:vorticity_bound} are satisfied. Let $P_{\Gamma}$ be defined through~\eqref{eq:hastings-ratio},~\eqref{eq:acceptance-Gamma} and \eqref{eq:transition-Gamma}. Then $P_{\Gamma}$ has $\pi$ as invariant distribution and $\Gamma$ as its vorticity matrix.
\end{theorem}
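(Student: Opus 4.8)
The plan is to combine the two preceding lemmas, since together they deliver both assertions almost immediately. First I would observe that the second claim --- that $\Gamma$ is the vorticity matrix of $(P_{\Gamma}, \pi)$ --- is precisely the content of Lemma~\ref{lem:P_Gamma_has_vorticity_Gamma}: that lemma establishes
\[ \pi(x) P_{\Gamma}(x,y) - \pi(y) P_{\Gamma}(y,x) = \Gamma(x,y) \quad \text{for all } x, y \in S, \]
which is exactly relation~\eqref{eq:Gamma} applied to the pair $(P_{\Gamma}, \pi)$. Hence the prescribed $\Gamma$ coincides with the vorticity computed from $P_{\Gamma}$ and $\pi$, and nothing further is needed for this half of the statement.

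For the invariance claim I would invoke Lemma~\ref{lem:vorticity_observations}(iii), which asserts that $\pi$ is invariant for a transition matrix $P$ exactly when the associated vorticity (via~\eqref{eq:Gamma}) annihilates $\1$. Applying this with $P = P_{\Gamma}$, whose vorticity is $\Gamma$ by the previous step, reduces invariance to checking $\Gamma \1 = 0$. But this is automatic: by hypothesis $\Gamma$ is a vorticity matrix, and by its very definition such a matrix satisfies $\Gamma \1 = 0$. Thus $\pi' P_{\Gamma} = \pi'$, completing the argument.

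The only genuine subtlety, and the point I would treat most carefully, is confirming that the hypotheses feeding Lemma~\ref{lem:P_Gamma_has_vorticity_Gamma} are in force under the theorem's assumptions: the symmetric structure condition~\eqref{eq:symmetric_structure} and the vorticity bound~\eqref{eq:vorticity_bound}. These are what guarantee that $A_{\Gamma}$ is a genuine acceptance probability (nonnegative and at most $1$), so that $P_{\Gamma}$ is a well-defined transition matrix, and they are also what makes the Hastings-ratio reciprocity of Lemma~\ref{lem:hastings-ratio} valid, which in turn is what allows the case split in Lemma~\ref{lem:P_Gamma_has_vorticity_Gamma} to close correctly. Once these conditions are read off from the statement, the two lemmas compose with no residual computation, so I do not anticipate any substantive obstacle at the level of the theorem itself; the real work has already been discharged in Lemma~\ref{lem:hastings-ratio} and Lemma~\ref{lem:P_Gamma_has_vorticity_Gamma}.
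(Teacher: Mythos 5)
Your argument is correct and is essentially identical to the paper's: the paper likewise deduces the vorticity claim from Lemma~\ref{lem:P_Gamma_has_vorticity_Gamma} and then obtains invariance from $\Gamma\1=0$ together with Lemma~\ref{lem:vorticity_observations}(iii). No further comment is needed.
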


\begin{remark}
We will refer to a combination $(Q, \Gamma,\pi)$ which satisfies~\eqref{eq:symmetric_structure} and~\eqref{eq:vorticity_bound} as a \emph{compatible combination}.
Especially verifying condition~\eqref{eq:vorticity_bound} requires some knowledge about $\pi$, but these do not need to be exact: it will suffice to have acces to a lower bound for $\pi$. In the proof of Theorem~\ref{thm:ornstein_uhlenbeck} the analogue of this condition for continuous spaces is checked as an example.
\end{remark}

\begin{remark}
Once we have access to a compatible combination of proposal chain $Q$, vorticity matrix $\Gamma$ and target distribution $\pi$, the NRMH algorithm has similar favorable properties as MH, in that only local information is required: $Q$, $\pi$ and $\Gamma$ only need to be evaluated at the current and proposed state, and no normalization of $\pi$ is required. In Section~\ref{sec:continuous_spaces} this will become even more important when NRMH is applied to a problem in continuous state space.
\end{remark}

\subsection{General observations on NRMH}

The following trivial observation serves to indicate the generality of this approach. It asserts that every Markov chain may be build, in a trivial way, by the described procedure.

\begin{proposition}
\label{prop:very_general}
Let $P$ be a Markov chain with invariant distribution $\pi$ and corresponding vorticity $\Gamma$, satisfying~\eqref{eq:symmetric_structure}. 
If we use $Q = P$ as proposal distribution and $\Gamma$ as vorticity matrix in the NRMH algorithm with target distribution $\pi$, then the resulting Markov chain $P_{\Gamma}$ is identical to $P$.
\end{proposition}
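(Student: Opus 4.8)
The plan is to reduce everything to a single observation: when $Q = P$ and $\Gamma$ is the vorticity of $(P,\pi)$, the non-reversible Hastings ratio $R_{\Gamma}$ is identically equal to $1$. Before making this computation I would first check that $(Q, \Gamma, \pi)$ with $Q = P$ forms a compatible combination, so that the NRMH construction of Section~\ref{sec:non-reversible_extension} actually applies. The symmetric structure condition~\eqref{eq:symmetric_structure} holds by hypothesis. That $\Gamma$ is a genuine vorticity matrix follows from Lemma~\ref{lem:vorticity_observations}: it is skew-symmetric by (i), and since $\pi$ is invariant for $P$, part (iii) gives $\Gamma \1 = 0$. Finally the bound~\eqref{eq:vorticity_bound} is immediate from the definition~\eqref{eq:Gamma}, since $\Gamma(x,y) = \pi(x) P(x,y) - \pi(y) P(y,x) \geq - \pi(y) P(y,x) = - \pi(y) Q(y,x)$, the inequality using $\pi(x) P(x,y) \geq 0$.

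The central step is then a direct substitution. For any $x, y$ with $\pi(x) Q(x,y) \neq 0$, inserting $Q = P$ and the definition~\eqref{eq:Gamma} of $\Gamma$ into the non-reversible Hastings ratio~\eqref{eq:hastings-ratio} gives
\[ R_{\Gamma}(x,y) = \frac{\Gamma(x,y) + \pi(y) P(y,x)}{\pi(x) P(x,y)} = \frac{\left[ \pi(x) P(x,y) - \pi(y) P(y,x)\right] + \pi(y) P(y,x)}{\pi(x) P(x,y)} = 1, \]
while in the remaining case $R_{\Gamma}(x,y) = 1$ by convention. Hence $A_{\Gamma}(x,y) = \min(1,1) = 1$ for all $x, y$.

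With all acceptance probabilities equal to $1$, the transition rule~\eqref{eq:transition-Gamma} collapses: for $x \neq y$ one gets $P_{\Gamma}(x,y) = Q(x,y) \cdot 1 = P(x,y)$, while the rejection term in the diagonal entry vanishes, so that $P_{\Gamma}(x,x) = Q(x,x) = P(x,x)$. Thus $P_{\Gamma} = P$. I do not expect any real obstacle here, in keeping with the trivial nature of the statement; the only points requiring care are the bookkeeping of the degenerate case $\pi(x) Q(x,y) = 0$---handled by the symmetric structure condition, which forces $\Gamma$ and $Q$ to share the same zero pattern---and verifying that the compatibility conditions hold, so that the construction is legitimately applied.
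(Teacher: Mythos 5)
Your proof is correct and follows exactly the paper's approach: the paper's one-line argument is precisely that $Q=P$ together with~\eqref{eq:Gamma} forces $A_{\Gamma}(x,y)=1$ for all $x\neq y$, which you spell out via the substitution $R_{\Gamma}(x,y)=1$. Your additional verification that $(Q,\Gamma,\pi)$ is a compatible combination (in particular that~\eqref{eq:vorticity_bound} holds automatically) is a sensible piece of bookkeeping the paper leaves implicit, but it does not change the argument.
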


\begin{proof}
It suffices to note, that by $Q = P$ and~\eqref{eq:Gamma}, $A_{\Gamma}(x,y) = 1$ for all $x, y \in S$, $x \neq y$.
\end{proof}

\begin{remark}
If, for some pair $(x, y) \in S \times S$,~\eqref{eq:vorticity_bound} holds with equality, the transition probability $P_{\Gamma}(x,y) = 0$ even when $Q(x,y) \neq 0$. Therefore irreducibility of $Q$ does not imply irreducibility of $P_{\Gamma}$, unless we impose the stronger condition:
\begin{equation} \label{eq:irreducibility_bound} \Gamma(x,y) > - \pi(y) Q(y,x) \quad \mbox{for all} \ x, y \in S.\end{equation}
\end{remark}

As noted in Remark~\ref{rem:adding_gamma}, one may alternatively construct any non-reversible chain $P$ by `adding' a vorticity matrix to a reversible chain $K$. We may translate one approach into the other, as follows:

\begin{proposition}
 \label{prop:equivalent_approaches}
Consider irreducible transition kernels $Q(x,y)$ and $H(x,y)$, related by $Q(x,y) = H(x,y) + \frac 1 {2 \pi(x)} \Gamma(x,y)$, both satisfying the symmetric structure condition~\eqref{eq:symmetric_structure}. Suppose $Q$, $\pi$ and $\Gamma$ satisfy~\eqref{eq:vorticity_bound}.
Suppose $P_1$ is the Markov kernel obtained from NRMH, using $Q$ as proposal chain and $\Gamma$ as vorticity matrix.
Suppose $P_2$ is the Markov kernel given by $P_2(x,y) = K(x,y) + \frac 1{2\pi(x)} \Gamma(x,y)$, where $K$ is the classical Metropolis-Hastings kernel obtained by using $H$ as proposal chain (and $\pi$ as target distribution). Then $P_1 = P_2$.
\end{proposition}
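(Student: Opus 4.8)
The plan is to verify the identity $P_1(x,y)=P_2(x,y)$ separately for the off-diagonal entries ($x\neq y$) and then deduce the diagonal entries from the fact that both kernels are row-stochastic. For the off-diagonal case the natural quantity to compare is $\pi(x)P_i(x,y)$, since multiplying by $\pi(x)$ linearizes both the minimum in the acceptance probability and the additive correction $\frac1{2\pi(x)}\Gamma(x,y)$.

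First I would rewrite the non-reversible Hastings ratio of $P_1$ in terms of $H$. Substituting $\pi(x)Q(x,y)=\pi(x)H(x,y)+\tfrac12\Gamma(x,y)$ into the denominator, and using skew-symmetry $\Gamma(y,x)=-\Gamma(x,y)$ to simplify the numerator $\Gamma(x,y)+\pi(y)Q(y,x)=\pi(y)H(y,x)+\tfrac12\Gamma(x,y)$, one obtains
\[ R_{\Gamma}(x,y)=\frac{\pi(y)H(y,x)+\tfrac12\Gamma(x,y)}{\pi(x)H(x,y)+\tfrac12\Gamma(x,y)}. \]
Writing $a=\pi(x)H(x,y)$, $b=\pi(y)H(y,x)$ and $g=\tfrac12\Gamma(x,y)$, this reads $\pi(x)Q(x,y)=a+g$ with numerator $b+g$, so for $Q(x,y)\neq 0$ (hence $a+g>0$),
\[ \pi(x)P_1(x,y)=(a+g)\min\Bigl(1,\tfrac{b+g}{a+g}\Bigr)=\min(a+g,\,b+g). \]

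Next I would compute $\pi(x)P_2(x,y)$. Since $K$ is the classical Metropolis--Hastings kernel built from $H$, its Hastings ratio is $b/a$ and $\pi(x)K(x,y)=a\min(1,b/a)=\min(a,b)$; adding the correction gives $\pi(x)P_2(x,y)=\min(a,b)+g$. The two expressions then coincide by the elementary identity $\min(a,b)+g=\min(a+g,\,b+g)$, valid for all real $a,b,g$. Dividing by $\pi(x)>0$ yields $P_1(x,y)=P_2(x,y)$ for every $x\neq y$ with $Q(x,y)\neq 0$; the degenerate case $Q(x,y)=0$ forces $\Gamma(x,y)=0$ and $H(x,y)=0$ through the symmetric structure condition~\eqref{eq:symmetric_structure} and the bounds following~\eqref{eq:vorticity_bound}, so both entries vanish.

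For the diagonal I would argue that both matrices are row-stochastic and must therefore agree there once the off-diagonal entries match. $P_1$ is a transition matrix by the construction of NRMH, while for $P_2$ the rows of $K$ sum to $1$ and, since $\Gamma\1=0$, the added term $\frac1{2\pi(x)}\Gamma(x,\cdot)$ contributes zero to each row sum, so $P_2$ is row-stochastic as well; equal row sums with equal off-diagonal entries force $P_1(x,x)=P_2(x,x)$. The heart of the argument — and the only step requiring any care — is the algebraic reduction of $R_\Gamma$ to a ratio in $H$ together with the observation that shifting both arguments of a minimum by $g$ is precisely the effect of the vorticity correction; everything else is bookkeeping.
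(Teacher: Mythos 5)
Your proof is correct and follows essentially the same route as the paper's: multiply by $\pi(x)$, substitute $Q = H + \tfrac{1}{2\pi}\Gamma$ into the minimum, use skew-symmetry of $\Gamma$, and pull the common shift $\tfrac12\Gamma(x,y)$ out of the $\min$ to recognize $\pi(x)P_2(x,y)$. Your handling of the degenerate case $Q(x,y)=0$ and of the diagonal entries via row-stochasticity is slightly more explicit than the paper's, but it is the same argument.
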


\begin{proof}
We compute, for $x \neq y$, with $Q(x,y) \neq 0$,
\begin{align*} \pi(x) P_1(x,y) & = \pi(x) \min \left(1, \frac{ \Gamma(x,y) + \pi(y) Q(y,x)}{\pi(x) Q(x,y)}  \right) Q(x,y) \\
& = \min \left( \pi(x) Q(x,y), \Gamma(x,y) + \pi(y) Q(y,x) \right)  \\
& = \min \left( \pi(x) H(x,y) + {\frac 1 2} \Gamma(x,y), \Gamma(x,y) + \pi(y) H(y,x) + {\frac 1 2} \Gamma(y,x) \right) \\
& = \min \left( \pi(x) H(x,y), \pi(y) H(y,x) \right) + {\frac 1 2} \Gamma(x,y) \\
& = \pi(x) \min \left( 1, \frac{\pi(y) H(y,x)}{\pi(x) H(x,y)}\right) H(x,y) + {\frac 1 2} \Gamma(x,y) = \pi(x) P_2(x,y).
\end{align*}
Both $P_1$ and $P_2$ have zeros on the off-diagonal entries for which $Q$ has zeros. Since both $P_1$ and $P_2$ represent transition probabilities, this also fixes their diagonal elements, which concludes the proof.
\end{proof}

\section{Advantages of non-reversible Markov chains in finite state spaces}
\label{sec:advantages}

Non-reversible Markov chains may offer important computational advantages compared to reversible chains. We will briefly discuss some of these advantages as they apply to finite state space Markov chains. It is to our knowledge an open question how to extend the results of Sections~\ref{sec:asymptotic_variance} and Sections~\ref{sec:large_deviations} in a generic way to uncountable and continuous state spaces.

\subsection{Asymptotic variance}
\label{sec:asymptotic_variance}
Consider a Markov chain $P$ on $S$ with invariant \emph{probability} distribution $\mu$.
Let $f : S \rightarrow \R$. We say that $f$ satisfies a Central Limit Theorem (CLT) if there is a $\sigma^2_f < \infty$ such that the normalized sum $n^{-1/2} \sum_{i=1}^n [f(X_i) - \mu(f)]$ converges weakly to a $N(0,\sigma_f^2)$ distribution. In this case $\sigma_f^2$ is called the asymptotic variance.

We will in this section work under the assumption that $S$ is finite. In this case, for any $f : S\rightarrow \R$ and irreducible $P$, a CLT is satisfied (see e.g. \cite{RobertsRosenthal2004}). The following result is obtained in \cite{Sun2010}, for a more extensive argument see \cite{Chen2013}.

\begin{proposition}
\label{prop:vorticity_improves_asymptotic_variance}
Let $K$ be a transition matrix of an irreducible reversible Markov chain with invariant probability distribution $\mu$. Let $\Gamma$ be a non-zero vorticity matrix and let $P = K + {\frac 1 2} \diag(\mu)^{-1} \Gamma$ be the transition matrix of an irreducible Markov chain. For any $f : S \rightarrow \R$ and denote by $\sigma_{f,K}^2$ and $\sigma_{f,P}^2$ the asymptotic variances of $f$ with respect to the transition matrices $K$ and $P$, respectively. 

Then for all $f : S \rightarrow \R$, we have $\sigma_{f,P}^2 \leq \sigma_{f,K}^2$, and there exists an $f$ such that $\sigma_{f,P}^2 < \sigma_{f,K}^2$.
\end{proposition}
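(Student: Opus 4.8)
The plan is to work with the standard variational characterization of asymptotic variance for finite-state Markov chains. For an irreducible transition matrix $P$ with invariant probability $\mu$, and a function $f$ with $\mu(f) = 0$ (which we may assume after centering), the asymptotic variance admits the representation
\[ \sigma_{f,P}^2 = \langle f, (2 Z_P - I) f \rangle_{\mu}, \]
where $Z_P = (I - P + \1 \mu')^{-1}$ is the fundamental matrix and $\langle \cdot, \cdot \rangle_{\mu}$ is the $\mu$-weighted inner product. Equivalently, writing $g$ for the solution of the Poisson equation $(I - P) g = f$ (unique up to constants, so normalized by $\mu(g)=0$), one has $\sigma_{f,P}^2 = \langle g, g\rangle_\mu - \langle Pg, Pg\rangle_\mu$, or in the cleanest form $\sigma_{f,P}^2 = 2\langle f, g\rangle_\mu - \langle f,f\rangle_\mu$. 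First I would fix this representation and record that because $K$ is reversible it is self-adjoint on $L^2(\mu)$, whereas the perturbation $\frac12 \diag(\mu)^{-1}\Gamma$ is, by Lemma~\ref{lem:vorticity_observations}(i), skew-adjoint on $L^2(\mu)$: indeed $\langle \diag(\mu)^{-1}\Gamma f, h\rangle_\mu = f' \Gamma' h = -f'\Gamma h = -\langle f, \diag(\mu)^{-1}\Gamma h\rangle_\mu$. Thus $P = K + A$ where $K = K^*$ and $A = -A^*$, and both share the invariant distribution $\mu$ (using $\Gamma\1 = 0$ for invariance via Lemma~\ref{lem:vorticity_observations}(iii)).

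Next I would exploit this self-adjoint/skew-adjoint splitting directly. The key algebraic step is to compare the two Poisson solutions. Let $g$ solve $(I-K)g = f$ and $g_P$ solve $(I-P)g_P = f$, both centered. The cleanest route is via the resolvent-type identity: writing $L = I - K$ (symmetric positive semidefinite on the orthogonal complement of constants) and $L_P = I - P = L - A$, I would express $\sigma_{f,P}^2 - \sigma_{f,K}^2$ as a manifestly nonpositive quantity. The heart of the matter is the identity (obtained by substituting the Poisson solutions and using skew-symmetry of $A$ to kill cross terms)
\[ \sigma_{f,K}^2 - \sigma_{f,P}^2 = 2 \langle A g_P, L^{-1} A g_P \rangle_{\mu} \geq 0, \]
or a variant thereof, where the right-hand side is nonnegative precisely because $L^{-1}$ restricted to mean-zero functions is positive semidefinite (as $K$ is reversible with spectrum in $[-1,1]$, so $I-K \succeq 0$). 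This yields $\sigma_{f,P}^2 \le \sigma_{f,K}^2$ for every $f$.

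For the strict-inequality claim, I would argue that the difference vanishes only when $A g_P = 0$, i.e. when the antisymmetric perturbation acts trivially on the Poisson solution; since $\Gamma$ is assumed nonzero and $\diag(\mu)^{-1}$ is invertible, $A \neq 0$, so one can choose $f$ (equivalently, prescribe $g_P$) making $A g_P \neq 0$, giving a strict decrease. I expect the main obstacle to be the algebraic derivation of the displayed difference identity: one must carefully substitute $g_P = L_P^{-1} f = (L - A)^{-1} f$, expand using the second-resolvent identity $(L-A)^{-1} = L^{-1} + L^{-1} A (L-A)^{-1}$, and track that all first-order and cross terms involving the skew-adjoint $A$ cancel under the symmetric bilinear form, leaving only a single positive-semidefinite quadratic form in $A g_P$. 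The cancellation of the linear-in-$A$ term is exactly where skew-adjointness is used, and getting the signs and the inner-product placements right there is the delicate part; once that identity is in hand the inequality and its strictness follow immediately. Since the full computation is essentially that of \cite{Sun2010}, I would only sketch it and refer to \cite{Chen2013} for the detailed execution.
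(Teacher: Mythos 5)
The paper does not actually prove this proposition: it is stated as a quotation of results from \cite{Sun2010} and \cite{Chen2013}, so there is no in-paper argument to compare yours against. Your sketch reconstructs essentially the argument of those references and its skeleton is sound: write $I-P = L - A$ with $L = I-K$ self-adjoint and $A = \frac12\diag(\mu)^{-1}\Gamma$ skew-adjoint in $L^2(\mu)$, use $\sigma^2_{f,P} = 2\langle f, (I-P)^{-1}f\rangle_\mu - \langle f,f\rangle_\mu$ on mean-zero $f$, and reduce everything to comparing the two resolvent quadratic forms. Two points need tightening. First, the difference identity holds with the \emph{adjoint} Poisson solution: setting $M = L - A$ and $g = (M^*)^{-1}f = (L+A)^{-1}f$, symmetrization gives
\begin{equation*}
\langle f, M^{-1}f\rangle_\mu = \langle g, Lg\rangle_\mu, \qquad
\langle f, L^{-1}f\rangle_\mu = \langle g, Lg\rangle_\mu + \langle Ag, L^{-1}Ag\rangle_\mu,
\end{equation*}
(the cross terms cancel exactly by skew-adjointness of $A$), whence $\sigma^2_{f,K} - \sigma^2_{f,P} = 2\langle Ag, L^{-1}Ag\rangle_\mu \ge 0$; your $g_P = (L-A)^{-1}f$ should be the time-reversed solution $(L+A)^{-1}f$, a sign you flagged but did not resolve. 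Second, $L$ is invertible only on the mean-zero subspace, so all inverses must be read there (equivalently via the fundamental matrix), and one must check that $Ag$ lies in that subspace --- it does, since $\1'\Gamma = -(\Gamma\1)' = 0$. With these repairs the nonnegativity is immediate, and strictness follows as you say: $A\neq 0$ and $A\1 = 0$ force $Ag\neq 0$ for some mean-zero $g$, and taking $f = (L+A)g$ gives $\sigma^2_{f,P} < \sigma^2_{f,K}$ because $L^{-1}$ is positive definite on mean-zero functions. In short: a correct sketch of the standard proof, with one fixable imprecision at exactly the step you identified as delicate.
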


In words, adding non-reversibility decreases asymptotic variance.

\subsection{Large deviations}
\label{sec:large_deviations}
In \cite{Rey-Bellet2014} it is noted that non-reversible diffusions on compact manifolds have favorable properties in terms of large deviations of the occupation measure from the invariant distribution. Inspired by their result, we present a simple (but to our knowledge novel) result in the same direction for finite state spaces.

As in the previous section, assume $S$ is finite.
We may transform a discrete time Markov chain on $S$ into a continuous time chain by making subsequent transitions after random waiting times that have independent $\mathrm{Exp}(\lambda)$ distributions. The discrete chain with transition matrix $P$ then transforms into a continuous time chain with generator 
\[ G(x,y)  = \left \{ \begin{array}{ll} \lambda P(x,y) \quad & \mbox{if $x \neq y$}, \\
                       -\lambda  \sum_{z \neq x} P(x,z) \quad & \mbox{if $x = y$}.
                      \end{array} \right.\]
The occupation measure of the resulting Markov process is defined as $L_t = \frac 1 t \int_0^t \delta_{X_s} \ d s$.
If $G$ is irreducible, the occupation measure satisfies for large time a large deviation principle with rate function
\[ I_G(\mu) = \sup_{u > 0} \left( - \sum_{x \in S} \mu_i \frac{ (G u)(x)}{u(x)} \right), \quad \mu \in \mathcal P(S),\] 
i.e. informally, for large $t$, for $A \subset \mathcal P(S)$,
\[ \P (L_t \in A) \approx \exp \left( - t \inf_{\mu \in A} I_G(\mu) \right).\]
Here $\mathcal P(S)$ is the set of probability distributions on $S$. The rate function satisfies the properties that (i) $I_G \geq 0$, (ii) $I_G$ is strictly convex, and (iii) $I_G(\mu) = 0$ if and only if $\mu = \pi$, where $\pi$ is the invariant distribution of $G$. Hence $I_G$ quantifies the probability of a large deviation of the occupation measure from the invariant distribution for large $t$. See \cite{Hollander2000} for further details.

The following result shows that deviations for non-reversible continuous time chains from the invariant distribution are asymptotically less likely than for the corresponding reversible chain.

\begin{proposition}
Suppose $G$ admits a decomposition of the form $G(x,y) = K(x,y) + \frac 1 {2 \pi(x)} \Gamma(x,y)$, where $\pi(x) K(x,y) = \pi(y) K(y,x)$ and $\Gamma(x,y) = -\Gamma(y,x)$ for all $x \neq y$.
Then $I_G(\mu) \geq I_K(\mu)$ for all $\mu \in \mathcal P(X)$, and the inequality is strict if $\Gamma u^{\star} \neq 0$.
\end{proposition}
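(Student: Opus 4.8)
The plan is to work with the Donsker--Varadhan functional
\[
J_G(\mu,u) := -\sum_{x \in S} \mu_x \frac{(Gu)(x)}{u(x)}, \qquad I_G(\mu) = \sup_{u > 0} J_G(\mu,u),
\]
and to exploit the fact that $u \mapsto J_G(\mu,u)$ is \emph{affine} in the generator. Writing $A := \half \diag(\pi)^{-1}\Gamma$, so that $G = K + A$ (the matrix $A$ has vanishing diagonal and row sums, since $\Gamma$ is skew-symmetric with $\Gamma\1 = 0$, which is forced by the invariance of $\pi$ under $G$), we obtain the decomposition $J_G(\mu,u) = J_K(\mu,u) + J_A(\mu,u)$. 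The entire argument then reduces to evaluating these two pieces at one cleverly chosen test function, namely the maximiser of the reversible problem.

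First I would invoke the classical fact that for the reversible generator $K$ the supremum defining $I_K(\mu)$ is attained at $u^{\star} := \sqrt{\mu/\pi}$ (equivalently $\mu_x = \pi(x)\,u^{\star}(x)^2$), with $I_K(\mu) = J_K(\mu,u^{\star}) = (u^{\star}, (-K)u^{\star})_{\pi}$; this is the Donsker--Varadhan identification of $I_K$ with the Dirichlet form, and follows from concavity of $J_K(\mu,\cdot)$ after substitution. The point of this particular $u^{\star}$ is that it annihilates the antisymmetric contribution: using $\mu_x/u^{\star}(x) = \pi(x)u^{\star}(x)$ and $A(x,y) = \Gamma(x,y)/(2\pi(x))$ gives
\[
J_A(\mu,u^{\star}) = -\half \sum_{x,y} u^{\star}(x)\,\Gamma(x,y)\,u^{\star}(y) = -\half\,(u^{\star})' \Gamma\, u^{\star} = 0
\]
by skew-symmetry of $\Gamma$. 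Hence $I_G(\mu) \geq J_G(\mu,u^{\star}) = J_K(\mu,u^{\star}) + J_A(\mu,u^{\star}) = I_K(\mu)$, which is the claimed inequality.

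For the strict inequality I would show that $u^{\star}$ fails to be a critical point of $J_G(\mu,\cdot)$ whenever $\Gamma u^{\star} \neq 0$. Differentiating $u \mapsto J_A(\mu,u)$ at $u^{\star}$ (the numerator and denominator each contributing a factor $\half\,\Gamma u^{\star}$) yields $\nabla_u J_A(\mu,\cdot)\big|_{u^{\star}} = \Gamma u^{\star}$, whereas $\nabla_u J_K(\mu,\cdot)\big|_{u^{\star}} = 0$ since $u^{\star}$ maximises $J_K(\mu,\cdot)$. Thus $\nabla_u J_G(\mu,\cdot)\big|_{u^{\star}} = \Gamma u^{\star}$, and this direction is genuinely transverse to the (trivial) scaling direction because $(u^{\star})'\Gamma u^{\star} = 0$, i.e. $\Gamma u^{\star} \perp u^{\star}$. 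Evaluating along $u_{\varepsilon} := u^{\star} + \varepsilon\,\Gamma u^{\star}$, which remains strictly positive for small $\varepsilon$ when $u^{\star} > 0$, gives $\tfrac{d}{d\varepsilon} J_G(\mu,u_{\varepsilon})\big|_{\varepsilon=0} = \|\Gamma u^{\star}\|^2 > 0$, so $I_G(\mu) \geq J_G(\mu,u_{\varepsilon}) > J_G(\mu,u^{\star}) = I_K(\mu)$ for small $\varepsilon > 0$.

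The algebraic heart of the proof is short: skew-symmetry forces $J_A(\mu,u^{\star}) = 0$, while the gradient $\Gamma u^{\star}$ directly witnesses the strict improvement. The two delicate points I expect to require care are (a) justifying that $u^{\star} = \sqrt{\mu/\pi}$ is the true maximiser for the reversible problem, handled either by concavity or by citing the Dirichlet-form representation of $I_K$; and (b) treating $\mu$ on the boundary of $\mathcal P(S)$, where $u^{\star}$ has zero components so that the perturbation $u_{\varepsilon}$ need not stay positive and the differentiation above is not directly valid. Point (b) is the genuine obstacle, and I would resolve it by a continuity/lower-semicontinuity argument, approximating boundary $\mu$ by interior measures where the clean perturbation argument applies.
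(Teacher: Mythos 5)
Your proof is correct and follows essentially the same route as the paper's: both evaluate the Donsker--Varadhan functional at $u^{\star} = \sqrt{\mu/\pi}$ (the paper reaches this test function after first symmetrizing via $\widetilde K(x,y) = \pi(x)K(x,y)$, $\widetilde\mu = \mu/\pi$), kill the vorticity contribution by skew-symmetry of $\Gamma$, identify the remaining term with $I_K(\mu)$ via the classical Dirichlet-form representation, and obtain strictness from the gradient $\Gamma u^{\star}$ at $u^{\star}$. Your perturbation $u_{\varepsilon} = u^{\star} + \varepsilon\,\Gamma u^{\star}$ and the remark on boundary $\mu$ merely make explicit steps the paper leaves implicit.
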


\begin{proof}
Assume $\mu(x) > 0$ for all $x$.
By writing $\widetilde K(x,y) = \pi(x) K(x,y)$, $\widetilde \mu(x) = \mu(x) / \pi(x)$, we have
 \[ \sum_{x=1}^n \mu(x) \frac{ (G u)(x)}{u(x)} = \sum_{x=1}^n \frac{\widetilde \mu(x)}{u(x)} \left[ (\widetilde K u)(x) + \half (\Gamma u)(x) \right].\] Hence it suffices to prove the result for $G = K + \half \Gamma$, with $K$ symmetric and $\Gamma$ anti-symmetric.
 
Now take $u(x) = \sqrt{\mu(x)}$. Then
\begin{align*}
- \sum_{x=1}^n \mu(x) \frac{ (G u)(x)}{u(x)} & =  -\sum_{x,y=1}^n \sqrt{\mu(x)} K(x,y) \sqrt{\mu(y)} - \half \sum_{x,y=1}^n \sqrt{\mu(x)} \Gamma(x,y) \sqrt{\mu(y)}.
\end{align*}
The first term is equal to the 
the rate function $I_K(\mu)$ for the empirical measure in the symmetric case, see e.g. \cite[Theorem IV.14]{Hollander2000}. Since $\Gamma$ is skew-symmetric, the second term vanishes. 
Hence taking the supremum over $u$ gives a value larger than or equal to $I_K(\mu)$.

Let $u^{\star}$ be given by $u^{\star}(x) = \sqrt{\mu(x)/\pi(x)}$. Then
\[ \left. \nabla_u \left( - \sum_{x=1}^n \mu(x) \left( \frac{\frac 1 {2 \pi(x)} \sum_{y=1}^n \Gamma(x,y) u(y) + \sum(y) K(x,y) u(y)}{u(x)} \right) \right)  \right|_{u^{\star}} = \Gamma u^{\star},\]
which proves the second statement in the proposition.

If $\mu(x) = 0$ for some $x$, the proof carries over by only summing over indices for which $\mu(x) > 0$.
\end{proof}

\subsection{Mixing time and spectral gap}

Non-reversibility in a Markov chain can have very favorable effects on mixing time and spectral gap, but we are not aware of general results in this direction. The reader is referred to \cite{Chen1999, DiaconisHolmesNeal2000, Levin2009} for theoretical results in this direction, and to e.g. \cite{Sun2010, TuritsynChertkovVucelja2011, Vucelja2014} for less rigorous and/or numerical results. In these experimental results, non-reversibility especially seems to improve sampling in the case of sampling from a multimodal distribution.

\section{NRMH in Euclidean space}
\label{sec:continuous_spaces}
In this section we explain how to extend the idea of non-reversible Metropolis-Hastings algorithm to a Euclidean state space. In Appendix~\ref{app:general_state_space} it is discussed how Metropolis-Hastings can be applied in the case of a general measurable space, and the discussion in this section is a special case of this.

\subsection{General setting}
\label{sec:continuous_theory}
Suppose $Q$ is a Markov transition kernel on $\R^n$ which has density $q(x,y)$ with respect to Lebesgue measure, i.e. $Q(x, dy) = q(x,y) d y$. We will be interested in sampling from a distribution on $\R^n$ with Lebesgue density $\pi$. We do not require that $\int_{\R^n} \pi(x) \ d x = 1$.
Let $\gamma : \R^n \times \R^n \rightarrow \R$ be Lebesgue measurable and furthermore suppose $\gamma$ satisfies
\begin{equation}
\label{eq:vorticity_kernel_condition_1} \gamma(x,y) = - \gamma(y,x) \quad \mbox{for all} \ x,y \in \R^n,
\end{equation}
and
\begin{equation}
\label{eq:vorticity_kernel_condition_2} \int_{A \times \R^n} \gamma(x, y) \  d x \, dy = 0, \quad \mbox{for all} \quad A \in \mathcal B(\R^n).
\end{equation}
Here $\mathcal B(\R^n)$ denotes the Borel $\sigma$-algebra generated by open sets in $\R^n$.
Furthermore suppose that
\begin{equation}
\label{eq:nonsingularity_condition_1}
\gamma(x,y) = 0 \quad \mbox{for all} \ x, y \in \R^n \ \mbox{for which} \ \pi(x) q(x,y) = 0,
\end{equation}
\begin{equation}
\label{eq:nonsingularity_condition_2}
\pi(x)q(x,y) = 0 \quad \mbox{if and only if} \quad  \pi(y) q(y,x) = 0, \quad \mbox{for all}\ x, y \in \R^n,
\end{equation}
and
\begin{equation}
\label{eq:nonnegativity_condition}\gamma(x,y) + \pi(y) q(y,x) \geq 0, \quad \mbox{for all} \quad x, y \in \R^n \ \mbox{for which} \ \pi(x) q(x,y) \neq 0.
\end{equation}

Define the Hastings ratio 
\begin{equation} \label{eq:hastings_ratio_regular} R(x,y) := \left\{ \begin{array}{ll} \frac{\gamma(x,y) + \pi(y) q(y,x)}{\pi(x) q(x,y)}, \quad & \pi(x) q(x,y) \neq 0, \\
1, \quad & \pi(x)q(x,y) = 0,\end{array} \right. \quad (x, y \in \R^n),
\end{equation}
acceptance probabilities $A(x,y) := \min(1, R(x,y))$,
and transition kernel
\[
P(x,B) := \int_B A(x,y) q(x,y) \ d  y + \left( 1 - \int_S A(x,y) q(x,y) \ d y \right) \1_{x \in B}. 
\]

The proof of the following theorem can be found in Appendix~\ref{app:general_state_space}. 
\begin{theorem}
\label{thm:regular_case}
With the above notation and definitions, and assuming conditions~\eqref{eq:vorticity_kernel_condition_1}, \eqref{eq:vorticity_kernel_condition_2}, \eqref{eq:nonsingularity_condition_1}, \eqref{eq:nonsingularity_condition_2} and~\eqref{eq:nonnegativity_condition} are satisfied, $P$ is a Markov transition kernel with invariant density $\pi$.
\end{theorem}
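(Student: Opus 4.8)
The plan is to mirror, in the continuous setting, the finite-dimensional argument that culminated in the main Theorem via Lemmas \ref{lem:hastings-ratio} and \ref{lem:P_Gamma_has_vorticity_Gamma}, replacing sums by integrals and the invariance condition $\Gamma \1 = 0$ by its integral analogue~\eqref{eq:vorticity_kernel_condition_2}. First I would record that $P(x, \cdot)$ is a genuine probability kernel: conditions~\eqref{eq:nonsingularity_condition_1} and~\eqref{eq:nonnegativity_condition} ensure $R(x,y) \geq 0$ wherever $\pi(x) q(x,y) \neq 0$, so $A(x,y) \in [0,1]$, and the construction of $P(x,B)$ adds back the rejection mass on the diagonal so that $P(x,\R^n) = 1$. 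One should also check measurability of $(x,y) \mapsto A(x,y)$ and of $x \mapsto \int A(x,y) q(x,y)\, dy$, which follows from Lebesgue measurability of $\gamma$, $q$ and $\pi$.

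The crux is the continuous analogue of the ``skew-symmetric balance'' identity. Define the (unnormalized) vorticity density $g(x,y) := \pi(x)\, p(x,y) - \pi(y)\, p(y,x)$, where $p(x,y) := A(x,y) q(x,y)$ is the off-diagonal transition density (the diagonal rejection terms cancel in this difference). The key pointwise claim, proved exactly as in Lemma~\ref{lem:hastings-ratio}, is that $R(x,y) < 1$ if and only if $R(y,x) > 1$ for $(x,y)$ with $q(x,y) \neq 0$; here I would use~\eqref{eq:vorticity_kernel_condition_1} and~\eqref{eq:nonnegativity_condition} in place of skew-symmetry and~\eqref{eq:vorticity_bound}. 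Given this dichotomy, a case split on the three possibilities ($R(x,y)<1$, $=1$, $>1$) shows, by the same one-line computation as in Lemma~\ref{lem:P_Gamma_has_vorticity_Gamma}, that
\begin{equation*}
\pi(x)\, p(x,y) - \pi(y)\, p(y,x) = \gamma(x,y) \quad \text{for a.e. } (x,y) \in \R^n \times \R^n,
\end{equation*}
so $g = \gamma$ almost everywhere. On the set where $\pi(x) q(x,y) = 0$, condition~\eqref{eq:nonsingularity_condition_2} forces $\pi(y) q(y,x)=0$ too and~\eqref{eq:nonsingularity_condition_1} gives $\gamma(x,y)=0$, so the identity holds trivially there as well.

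Finally I would establish invariance. For a test set $B \in \mathcal B(\R^n)$ one wants $\int_{\R^n} \pi(x) P(x,B)\, dx = \int_B \pi(y)\, dy$. Splitting $P(x,B)$ into its off-diagonal part $\int_B p(x,y)\, dy$ and its diagonal rejection part, and substituting $\pi(x) p(x,y) = \pi(y) p(y,x) + \gamma(x,y)$ from the identity above, the contribution from $\gamma$ integrates to zero by~\eqref{eq:vorticity_kernel_condition_2} (applied with the set $B$), while the $\pi(y) p(y,x)$ term recombines with the rejection mass to yield $\int_B \pi(y)\, dy$ by Fubini. The main obstacle I anticipate is not any single algebraic step — each is a direct translation of the discrete proof — but rather the measure-theoretic bookkeeping: justifying the Fubini interchange, handling the ``a.e.'' qualifications so that the set where $\pi(x) q(x,y)=0$ does not spoil the integral identities, and confirming that~\eqref{eq:vorticity_kernel_condition_2} is exactly the right integrated form of $\Gamma \1 = 0$ needed to kill the antisymmetric part. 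This is presumably why the authors defer the full argument to Appendix~\ref{app:general_state_space}, where the general measurable-space version subsumes this computation.
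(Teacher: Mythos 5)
Your argument is correct, and its mathematical skeleton --- the dichotomy $R(x,y)<1 \iff R(y,x)>1$, the pointwise identity $\pi(x)A(x,y)q(x,y)-\pi(y)A(y,x)q(y,x)=\gamma(x,y)$, and then integration against~\eqref{eq:vorticity_kernel_condition_2} --- is exactly the skeleton the paper uses. The difference is one of packaging: the paper does not prove Theorem~\ref{thm:regular_case} directly at the density level. Instead it first builds an abstract measurable-space version (Appendix~\ref{app:general_state_space}), working with the vorticity \emph{measure} $\Gamma(dx,dy)$, its Jordan decomposition $\Gamma=\Gamma^+-\Gamma^-$, and Radon--Nikodym derivatives of $\Gamma$ and $B_Q$ with respect to $F_Q=\pi(dx)Q(x,dy)$; Lemmas~\ref{lem:Hastings_ratio_symmetric} and~\ref{lem:P_has_vorticity_Gamma} are the abstract counterparts of your two steps, and the proof of Theorem~\ref{thm:regular_case} then reduces to checking that $\Gamma(dx,dy)=\gamma(x,y)\,dx\,dy$, $F_Q$, $B_Q$ satisfy the hypotheses of the general Theorem~\ref{thm:general_case}. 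Your route is more elementary and self-contained: working directly with Lebesgue densities, you avoid the Jordan decomposition and the equivalence-of-measures language entirely, and the ``a.e.''~bookkeeping you flag is genuinely all that remains to be checked (your handling of the degenerate set via~\eqref{eq:nonsingularity_condition_1}--\eqref{eq:nonsingularity_condition_2}, and the observation that antisymmetry converts~\eqref{eq:vorticity_kernel_condition_2} into $\int_{\R^n\times B}\gamma=0$, are both right). What the paper's detour buys is generality: Theorem~\ref{thm:general_case} applies to arbitrary measurable state spaces and to vorticity measures that need not have a density with respect to any reference measure, so the Euclidean statement becomes a corollary rather than a standalone computation. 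One cosmetic remark: the dichotomy lemma needs only the antisymmetry~\eqref{eq:vorticity_kernel_condition_1} together with~\eqref{eq:nonsingularity_condition_2} (to keep the denominators nonzero); condition~\eqref{eq:nonnegativity_condition} is what makes $A(x,y)$ a probability, not what drives the dichotomy.
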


In analogy with the discrete state space setting we call $\gamma$ the \emph{vorticity density} of $(P,\pi)$. If $\gamma \neq 0$ on a set of positive Lebesgue measure then $P$ is non-reversible, i.e. there exist sets $B_1, B_2 \subset \R^n$ such that
\[ \int_{B_1} \left\{ \int_{B_2} \pi(x) P(x,dy)\right\} \ d x \neq \int_{B_2} \left\{ \int_{B_1} \pi(x) P(x, dy) \right\} \  dx.\]

\subsection{Langevin diffusions for sampling in Euclidean space}

The application of non-reversible sampling methods in Euclidean space is a relatively unexplored area. In this short review section we will discuss the use of Langevin diffusions for simulating from a target density, and discuss the potential role of NRMH in within this context.

Assume that the target density $\pi$ is continuously differentiable. It is well known that $\pi$ is invariant for the \emph{Langevin diffusion}
\begin{equation} \label{eq:langevin} d X(t) = \nabla (\log \pi)(X(t)) \ d t + \sqrt{2} \ d W(t), \quad t \geq 0.\end{equation}
where $W$ is a standard Brownian motion in $\R^n$. A natural (discrete time) Markov chain for sampling $\pi$ is then the \emph{Euler-Maruyama discretization} of the Langevin diffusion,
\begin{equation} \label{eq:euler-maruyama} X_{k+1} \sim N(X_k + h \nabla (\log \pi)(X_n), 2 h), \quad k = 0, 1, 2, \hdots.\end{equation}
Here $h$ is a suitable stepsize. This discretization is approximately correct if $h$ is chosen to be sufficiently small. However, such a choice of $h$ results in slow convergence to equilibrium of the Markov chain. When $h$ is large, then the discretization results in large discrepancy between~\eqref{eq:langevin} and~\eqref{eq:euler-maruyama}. As a result also the respective invariant distributions will be different and in particular the invariant distribution of~\eqref{eq:euler-maruyama} will not correspond to the desired distribution with density $\pi$.
It is customary to correct for this by considering the Euler-Maruyama discretization as a proposal for Metropolis-Hastings, resulting in the Metropolis Adjusted Langevin Algorithm (MALA, \cite{RobertsTweedie1996, RobertsRosenthal1998}).

Any diffusion of the form
\begin{equation} \label{eq:nonreversible-langevin} d X(t) = -(I + S) \nabla (\log \pi)(X(t)) \ d t + \sqrt{2} \, d W(t), \quad t \geq 0,\end{equation}
with $S \in \R^{n \times n}$ skew-symmetric, has $\pi$ as invariant density. If $S \neq 0$ then the diffusion is non-reversible. In analogy with Section~\ref{sec:advantages} one may hope that such a non-reversible diffusion has advantages compared to the reversible Langevin diffusion. In fact for multivariate Gaussian target distributions these advantages are clear \cite{Hwang1993, Lelievre2013}, as we will discuss below. More generally\footnote{Strictly speaking, the analysis of \cite{Rey-Bellet2014} applies to diffusions on a compact manifold.} the probability of large deviations of the empirical distribution from the invariant distribution are reduced for non-reversible diffusions \cite{Rey-Bellet2014}.

When discretizing~\eqref{eq:nonreversible-langevin} it is again necessary to correct for discretization error by a MH accept/reject step. However, since MH generates reversible chains, one should expect that also the favourable properties of non-reversibility are destroyed.
Instead, an implementation of NRMH should be able to preserve these favourable properties of non-reversible diffusions. We will illustrate this for multivariate Gaussian distributions.

\subsection{Non-reversible Metropolis-Hastings for sampling multivarite Gaussian distributions}
\label{sec:nrmh_gaussian}
Consider as target distribution a centered normal distribution with positive definite covariance matrix $V$. In this case the Langevin diffusion becomes the Ornstein-Uhlenbeck process
\begin{equation} \label{eq:reversible_ou} d X(t) = -V^{-1} X(t) \, d t + \sqrt{2} \, d W(t),\end{equation}
where $(W(t))$ is an $n$-dimensional standard Brownian motion. 
In \cite{Hwang1993}, it is shown that adding a `nonreversible' component of the form $-S V^{-1}$ to the drift, with $S$ skew-symmetric, can improve convergence of the sample covariance. 
Therefore we will instead consider the Ornstein-Uhlenbeck process with modified drift
\begin{equation} \label{eq:nonreversible_ou} d X(t) = B X(t) \ d t + \sqrt{2} \ d W(t),\end{equation}
where $B := -(I+S)V^{-1}$ with $S$ skew-symmetric. For any choice of skew-symmetric $S$, this diffusion keeps $\pi$ invariant. The convergence to equilibrium of the diffusion is governed by the spectral bound, ${\mathfrak s}(B) := \max \{ \Re \lambda : \lambda \in \sigma(B)\}$.
More specifically,
\[ \Cov(X(t)) = 2 \int_0^t e^{B s} e^{B's} \ d s \rightarrow 2 \int_0^{\infty} e^{Bs} e^{B's} \ d s = V \quad \mbox{(as $t \rightarrow \infty$)},\]
with rate of convergence
\[ \frac 1 t \ln \left\|\int_t^{\infty} e^{Bs} e^{B's} \ d s \right\| \rightarrow 2 {\mathfrak s}(B).\]
Also, ${\mathfrak s}(B) \leq {\mathfrak s}(-V^{-1})$ for any choice of $S$. In other words, adding a non-reversible term increases the speed of convergence to equilibrium. 
In \cite{Lelievre2013}, it is established that it is possible to choose $S$ optimally, resulting in ${\mathfrak s}(B) = - \operatorname{tr} (V^{-1}) / n$. By choosing $S$ in such a way the convergence of the chain is effectively governed by the average of the eigenvalues, which should be compared to the reversible case in which the `worst' eigenvalue determines the speed of convergence.

We will apply the theory developed in Section~\ref{sec:continuous_theory} to the time discretization of the non-reversible Ornstein-Uhlenbeck process. To be able to satisfy~\eqref{eq:nonnegativity_condition} later on, we will require flexibility in the magnitude of the drift multiplier $B$ and the diffusivity. 
We consider the time discretization of~\eqref{eq:nonreversible_ou}, with step size $h > 0$, is
\begin{equation} \label{eq:nonreversible_ou_discretization} X_{k+1} = X_k + h B X_k + \sqrt{2h \sigma} Z_{k+1},\end{equation}
where $(Z_k)$ are i.i.d. standard normal and $\sigma > 0$. For $\sigma = 1$ this is the usual Euler-Maruyama discretization. The transition kernel of the Euler-Maruyama discretization will serve as our proposal distribution $Q(x,dy) = q(x,y) \, d y$, and we will first determine the vorticity density $\gamma$ of $Q$.

Let $\mathfrak{r}(A) = \max \{ |\lambda| : \lambda \in \sigma(A)\}$ denote the spectral radius of a square matrix $A$. Provided $\mathfrak{r}(I+h B) < 1$, the invariant probability distribution of $Q$ is the centered normal distribution with covariance $R$, where $R$ is the unique positive definite matrix solution to the 
\emph{discrete time Lyapunov equation} (see e.g. \cite[Theorem 13.2.1]{LancasterTismenetsky1985})
\begin{equation} \label{eq:discrete_lyapunov} R = 2 h \sigma^2  I + (I+h B) R(I+h B').
\end{equation}
Let $\rho$ denote the density of the invariant probability distribution of $Q$. Let $f(x,y) = \rho(x) q(x,y)$. The vorticity density of the proposal chain is 
\begin{equation} \label{eq:gamma} \gamma(x,y) := f(x,y) - f(y,x).\end{equation} 
As target density we have
\[ \pi(x) = \left( (2 \pi)^n \det V\right)^{-1/2} \exp\left( - \half x' V^{-1} x \right), \quad x \in \R^n.\]
It is clear that $\gamma$ satisfies~\eqref{eq:vorticity_kernel_condition_1} and~\eqref{eq:vorticity_kernel_condition_2}. Also since $\pi$ and $q$ are non-degenerate,~\eqref{eq:nonsingularity_condition_1} and~\eqref{eq:nonsingularity_condition_2} are satisfied. The same statements hold trivially for scalar multiples of $\gamma$.

Verification of~\eqref{eq:nonnegativity_condition} requires more effort. We provide a sufficient condition. The proof of this result is provided in Appendix~\ref{app:proof_ornstein_uhlenbeck}.

\begin{theorem}
\label{thm:ornstein_uhlenbeck}
Define constants $0 < C_1 \leq C_2$ by
\begin{equation}
\label{eq:constants} C_1 = \|V^{-1/2} (I+S) V^{-1} (I-S) V^{1/2}\| \quad \mbox{and} \quad C_2 = \| V^{-1/2} (I+S) V^{-1/2} \|^2 \|V\|.
\end{equation}
Suppose $c > 0$, $h > 0$ and $\sigma > 0$ satisfy 
\begin{equation} \label{eq:conditions_parameters} h < \frac 2 {C_2}, \quad \sigma^2 \leq  \frac{2 - h C_2}{2 - h (C_2 - C_1)}, \quad \mbox{and}  \quad c \leq \sigma^n.
\end{equation}
Then $\widetilde \gamma(x,y) := c \gamma(x,y)$, with $\gamma$ as constructed above, satisfies~\eqref{eq:nonnegativity_condition}, and is therefore a vorticity density compatible with proposal distribution $Q(x,dy) \sim N((I + h B)x, 2 h \sigma^2 I)$ and invariant distribution $N(0,V)$.
\end{theorem}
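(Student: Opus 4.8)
The plan is to reduce the pointwise bound~\eqref{eq:nonnegativity_condition} to a single domination of densities, $c\,\rho(y)\le\pi(y)$ for all $y\in\R^n$. Writing $\gamma(x,y)=\rho(x)q(x,y)-\rho(y)q(y,x)$ and $\widetilde\gamma=c\gamma$, one has
\[ \widetilde\gamma(x,y)+\pi(y)q(y,x)=c\,\rho(x)q(x,y)+\big(\pi(y)-c\,\rho(y)\big)q(y,x); \]
the first summand is nonnegative, so it suffices that the bracket be nonnegative everywhere, i.e. that $c\,\rho\le\pi$. Since $\rho=N(0,R)$ and $\pi=N(0,V)$ are centered Gaussians, $\pi(y)/(c\,\rho(y))=c^{-1}\sqrt{\det R/\det V}\,\exp\!\big(\tfrac12 y'(R^{-1}-V^{-1})y\big)$, and requiring this to be at least $1$ for every $y$ is equivalent to the two conditions (a) $R\preceq V$ (equivalently $R^{-1}\succeq V^{-1}$, so the quadratic form attains its minimum $0$ at $y=0$) and (b) $c\le\sqrt{\det R/\det V}$. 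The theorem thus reduces to verifying (a) and (b) under~\eqref{eq:conditions_parameters}.

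For (b) I would first record the easy ordering, which holds with room to spare. Put $M:=I+hB$; the hypothesis $h<2/C_2$ forces $\mathfrak{r}(M)<1$, so the discrete Lyapunov operator $\mathcal L(X):=X-MXM'$ is invertible with positivity-preserving inverse $\mathcal L^{-1}(Y)=\sum_{k\ge0}M^kY(M^k)'$. From~\eqref{eq:discrete_lyapunov}, $R=2h\sigma^2\,\mathcal L^{-1}(I)$, while the continuous stationary identity $BV+VB'=-2I$ (valid because $B=-(I+S)V^{-1}$) gives $\mathcal L(V)=2hI-h^2BVB'$, hence $V=\sigma^{-2}R-h^2\mathcal L^{-1}(BVB')$. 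As $BVB'=(I+S)V^{-1}(I-S)\succeq0$, the subtracted term is positive semidefinite, so $\sigma^2V\preceq R$ for free. This yields $\det R\ge\sigma^{2n}\det V$, whence $\sqrt{\det R/\det V}\ge\sigma^n\ge c$ by the last inequality in~\eqref{eq:conditions_parameters}, proving (b).

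The crux is (a), the reverse ordering $R\preceq V$. Applying $\mathcal L$ and subtracting the two identities above gives $\mathcal L(V-R)=2h(1-\sigma^2)I-h^2BVB'$, so by positivity of $\mathcal L^{-1}$ it is enough that $h^2BVB'\preceq 2h(1-\sigma^2)I$, i.e. $h\,\lambda_{\max}(BVB')\le 2(1-\sigma^2)$. Now $BVB'$ is symmetric positive semidefinite and similar to $V^{-1/2}(BVB')V^{1/2}=V^{-1/2}(I+S)V^{-1}(I-S)V^{1/2}$, so its largest eigenvalue is bounded by the operator norm of the latter, namely $\lambda_{\max}(BVB')\le C_1$ from~\eqref{eq:constants}. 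It therefore suffices that $\sigma^2\le(2-hC_1)/2$, and the final bookkeeping is to check that the stated window lies inside this one: since $0<C_1\le C_2$ a short computation gives $\frac{2-hC_2}{2-h(C_2-C_1)}\le\frac{2-hC_1}{2}$, so the hypothesis on $\sigma^2$ in~\eqref{eq:conditions_parameters} implies $\sigma^2\le(2-hC_1)/2$ and hence (a).

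I expect the main obstacle to be exactly step (a): relating the proposal covariance $R$, known only implicitly through~\eqref{eq:discrete_lyapunov}, to the target covariance $V$, and extracting a clean matrix inequality with transparent $h$- and $\sigma$-dependence. The device that makes this work is the positivity of $\mathcal L^{-1}$ combined with the continuous-time identity $BV+VB'=-2I$, which expresses both $R$ and $V$ through the same operator and reduces everything to controlling the single symmetric matrix $BVB'$. Here $C_2$ plays only an auxiliary role (it secures the stability $\mathfrak{r}(M)<1$ that makes $R$ well defined and $\mathcal L^{-1}$ positive), whereas the sharper constant $C_1$ drives the decisive eigenvalue bound; the somewhat unwieldy form of the $\sigma^2$-threshold in~\eqref{eq:conditions_parameters} is merely a conservative packaging of these two estimates. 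The remaining pieces — the Gaussian comparison in the first step and the similarity/norm estimate — are routine.
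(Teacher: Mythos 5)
Your argument is correct, and while the first and last steps coincide with the paper's, the central matrix inequality is handled by a genuinely different and more direct route. The reduction to the pointwise domination $c\,\rho\leq\pi$ is the same as in the paper (there it is phrased through the joint densities and the block identity $M^{-1}-N^{-1}=\operatorname{diag}(R^{-1}-V^{-1},0)$, which amounts to exactly your marginal comparison), and your determinant condition matches Lemmas~\ref{lem:det_M} and~\ref{lem:lower_bound_R}; your derivation of $R\succeq\sigma^2V$ via the Neumann series for $\mathcal{L}^{-1}$ is a discrete-time counterpart of the paper's integral representation $R=\int_0^\infty e^{Bs}(2\sigma^2I+T)e^{B's}\,ds$. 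The real divergence is in proving $R\preceq V$: the paper bounds the matrix $BRB'$, which involves the \emph{unknown} $R$, by a bootstrap inequality built on the semigroup contraction estimate $\|e^{Bs}\|_V\leq e^{-s/\|V\|}$ (Lemma~\ref{lem:R_estimates}(i)--(iii)), which is where the constant $C_2$ enters a second time; you instead apply $\mathcal{L}$ to $V-R$ and reduce everything to the \emph{known} matrix $BVB'=(I+S)V^{-1}(I-S)$, bounded directly by $C_1$ via similarity. This buys a shorter proof that bypasses Lemma~\ref{lem:R_estimates} entirely, and in fact yields the weaker sufficient condition $\sigma^2\leq 1-hC_1/2$; your verification that $\frac{2-hC_2}{2-h(C_2-C_1)}\leq\frac{2-hC_1}{2}$ (the difference of the cross-products is $h^2C_1(C_2-C_1)\geq0$) is what reconciles your threshold with the one stated in~\eqref{eq:conditions_parameters}, so you actually prove a marginally stronger theorem. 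The price of the paper's longer route is a more conservative $\sigma^2$-window; what it buys is the explicit quantitative sandwich $\sigma^2V\preceq R\preceq\sigma^2\frac{2-h(C_2-C_1)}{2-hC_2}V$, which is of independent interest.

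One step you assert without proof deserves attention: that $h<2/C_2$ forces $\mathfrak{r}(I+hB)<1$, which underpins the invertibility and positivity of $\mathcal{L}^{-1}$ on which both of your orderings rest. This is true but not routine, since $I+hB$ is non-normal; it is the content of Lemma~\ref{lem:spectral-radius}, whose proof controls $|\lambda|^2$ by separating real and imaginary parts using the skew-symmetry of $S$ in the $\langle\cdot,\cdot\rangle_V$ inner product, combined with the estimate $\|V^{-1/2}(I-S^2)V^{-1/2}\|\leq C_2$. You should supply this argument (or cite it) rather than treat the stability as automatic.
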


\begin{remark}
\label{rem:choice_parameters}
How should one choose $c$, $h$ and $\sigma$? It seems reasonable to choose $\sigma^2$ equal to the maximal allowed value in~\eqref{eq:conditions_parameters}, so that the deviation from the Euler-Maruyama discretization (which has $\sigma = 1$) is minimal; i.e. let
\begin{equation}
 \sigma = \sigma(h) = \sqrt{\frac{2 - h C_2}{2 - h (C_2 - C_1)}}.
\end{equation}
To maximize the non-reversibility effects in the acceptance probability one should choose $c$ as large as possible, i.e. $c = \sigma^n(h)$.
A heuristic estimate for the scaling of the expected step size is given by the step size $h$ times the multiplicative factor in the vorticity, $c = \sigma^n(h)$ in the acceptance probability. Note that $h \sigma^n(h) = 0$ for $h = 0$ and $h = \frac 2{C_2}$. 
Maximization of $h \sigma^n(h)$ with respect to $h$ yields
\begin{equation} \label{eq:maximal_vorticity} h = \frac{2}{C_2} + \frac{(n+2)C_1}{2 C_2(C_2-C_1)} - \frac{\sqrt{(n-2)^2 C_1^2 + 8 n C_1 C_2}}{2 C_2(C_2-C_1)}\end{equation}
as long as $C_1 < C_2$ (which is the case in which $S$ and $V$ do not commute). This expression satisfies the condition $h < \frac 2 {C_2}$. A first order Taylor approximation around $1/n$ yields the simplified expression $h \approx \frac{4}{C_2 (n+2)} < \frac{2}{C_2}$. The corresponding value of $\sigma^2(h)$ is to first order equal to $\sigma^2(h) \approx 1 - \frac{2 C_1}{C_2(n+2)}$.

In case $C_1 = C_2$, the optimal value of $h$ is given by $h = \frac{4}{(n+2)C_2}$, with $\sigma^2(h) = 1 - \frac{2}{n+2}$.
\end{remark}

To summarize, a general procedure for applying non-reversible Metropolis-Hastings may be described by the steps listed in Figure~\ref{fig:steps-nrmh}.

\begin{figure}[ht]
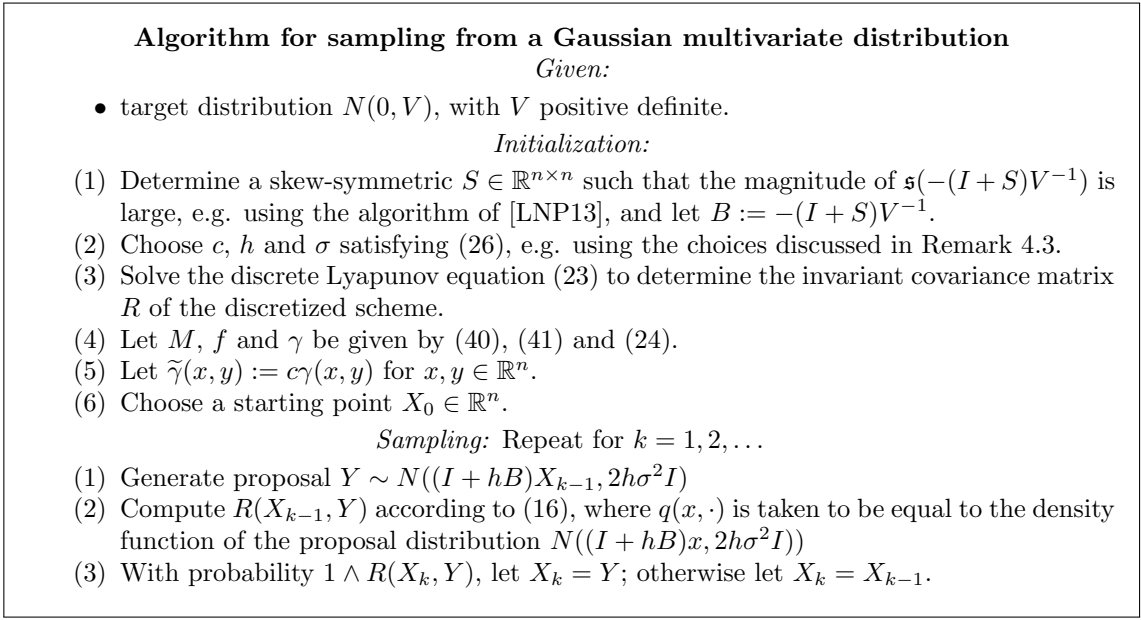

\begin{framed}
\textbf{Algorithm for sampling from a Gaussian multivariate distribution} \\

\emph{Given:}
\begin{itemize}
\item target distribution $N(0,V)$, with $V$ positive definite.
\end{itemize}
\emph{Initialization:}
\begin{enumerate}
\item Determine a skew-symmetric $S \in \R^{n \times n}$ such that the magnitude of ${\mathfrak s}(-(I+S)V^{-1})$ is large, e.g. using the algorithm of \cite{Lelievre2013}, and let $B := -(I+S)V^{-1}$.
\item Choose $c$, $h$ and $\sigma$ satisfying~\eqref{eq:conditions_parameters}, e.g. using the choices discussed in Remark~\ref{rem:choice_parameters}.
\item Solve the discrete Lyapunov equation~\eqref{eq:discrete_lyapunov} to determine the invariant covariance matrix $R$ of the discretized scheme.
\item Let $M$, $f$ and $\gamma$ be given by~\eqref{eq:M},~\eqref{eq:f} and~\eqref{eq:gamma}.
\item Let $\widetilde \gamma(x,y) := c \gamma(x,y)$ for $x, y\in \R^n$.
\item Choose a starting point $X_0 \in \R^n$.
\end{enumerate}
\emph{Sampling:} 
Repeat for $k = 1, 2, \dots$
\begin{enumerate}
 \item Generate proposal $Y \sim N((I+h B) X_{k-1}, 2h \sigma^2 I)$
 \item Compute $R(X_{k-1}, Y)$ according to~\eqref{eq:hastings_ratio_regular}, where $q(x,\cdot)$ is taken to be equal to the density function of the proposal distribution $N((I+h B)x, 2h \sigma^2 I))$
 \item With probability $1 \wedge R(X_k,Y)$, let $X_k = Y$; otherwise let $X_k = X_{k-1}$.
\end{enumerate}
\end{framed}

\caption{Non-reversible Metropolis-Hastings with Ornstein-Uhlenbeck proposals}
\label{fig:steps-nrmh}
\end{figure}
%
%

\subsection{Numerical experiments}
\label{sec:experiments_continuous}

Below we carry out two experiments illustrating the approach above. For the obtained Markov chain realization $(X_1, \dots, X_P)$, we will obtain an estimate of the decorrelation by considering the \emph{empirical autocorrelation function (EACF)} $r$ defined by 
\[ r^i(k) = \frac 1{P-k} \sum_{p=1}^{P-k} (X_p^i - \widehat \mu^i)(X_{p+k}^i - \widehat \mu^i),\]
where $i$ ranges over the coordinates $i = 1, \dots, n$, and where $\widehat \mu^i = \frac 1 P \sum_{p=1}^P X_p^i$ is the empirical average of the $i$-th coordinate. A fast decaying EACF indicates that the samples generated by the Markov chain are quickly decorrelating.

\subsubsection{Three-dimensional example}
\label{sec:3d}
In this example, from \cite{Hwang1993}, we take as target covariance structure $V$ a diagonal matrix with $\diag(V) = \begin{pmatrix} 1, 1,1/4 \end{pmatrix}$.  The optimal nonlinear drift is obtained by letting 
\[S = \begin{pmatrix} 0 & \sqrt{3} & 1 \\ -\sqrt{3} & 0 & 1 \\ -1 & -1 & 0\end{pmatrix}.\]
We choose the parameter values in accordance with Remark~\ref{rem:choice_parameters}, resulting in 
\[ c = 0.5333, \quad h = 0.0334, \quad \sigma = 0.8109.\]
The performance of NRMH is compared to MH with identical step-size $h$, and reversible proposal distribution $Q(x,dy) \sim N((I - h V^{-1})x, 2h I)$.
In Figure~\ref{fig:EACF3d} the EACFs for this 3-dimensional example are plotted. Here we see that NRMH helps to decrease the autocorrelations of the slowly decorrelating  components in MH (here, the first two components). It achieves this without increasing autocorrelations of components that are already quickly decorrelating (here, the third component).

\begin{figure}[ht]
\includegraphics[width=\textwidth]{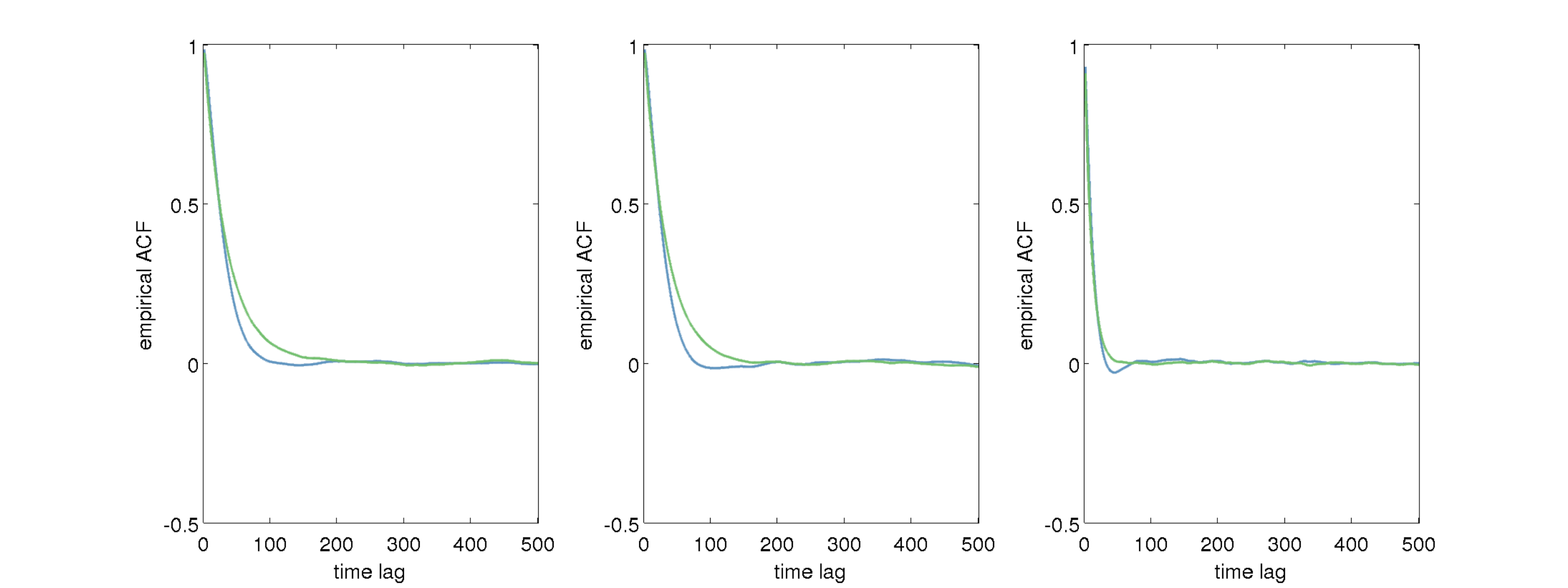}
\caption{Empirical autocorrelation functions for 3-dimensional example of Section~\ref{sec:3d}.
 The blue curve represents the non-reversible Metropolis Hastings method. 
 The green curve represents reversible proposals in conjunction with the usual MH acceptance rule.
 The step sizes $h$ for all approaches are taken to be the same. These plots are based on an MCMC trajectory of $10^6$ steps.}
\label{fig:EACF3d}
\end{figure}

\subsubsection{Nine-dimensional example}
\label{sec:9d}

Here we generated a random diagonal matrix $V$, with 
\[ \diag(V) = \begin{pmatrix} 0.8147, 0.9058, 0.1270, 0.9134, 0.6324, 0.0975, 0.2785, 0.5469, 0.9575 \end{pmatrix}.\]
Using the algorithm described in \cite{Lelievre2013} an optimal non-reversible drift $B = -(I+S)V^{-1}$ can be computed. For reversible dynamics, we have ${\mathfrak s}(-V^{-1}) = -1.0444$, while for the optimal non-reversible dynamics, ${\mathfrak s}(B) = -\operatorname{tr} V^{-1}/n = - 3.2891$.
In Figure~\ref{fig:EACF9d} the EACFs for this 9-dimensional example are plotted. One can clearly see the typical effect of adding non-reversibility: the autocorrelation of the worst coordinates is improved so that it becomes on par with that of the fastest decorrelating coordinates.

\begin{figure}[ht]
\includegraphics[width=\textwidth]{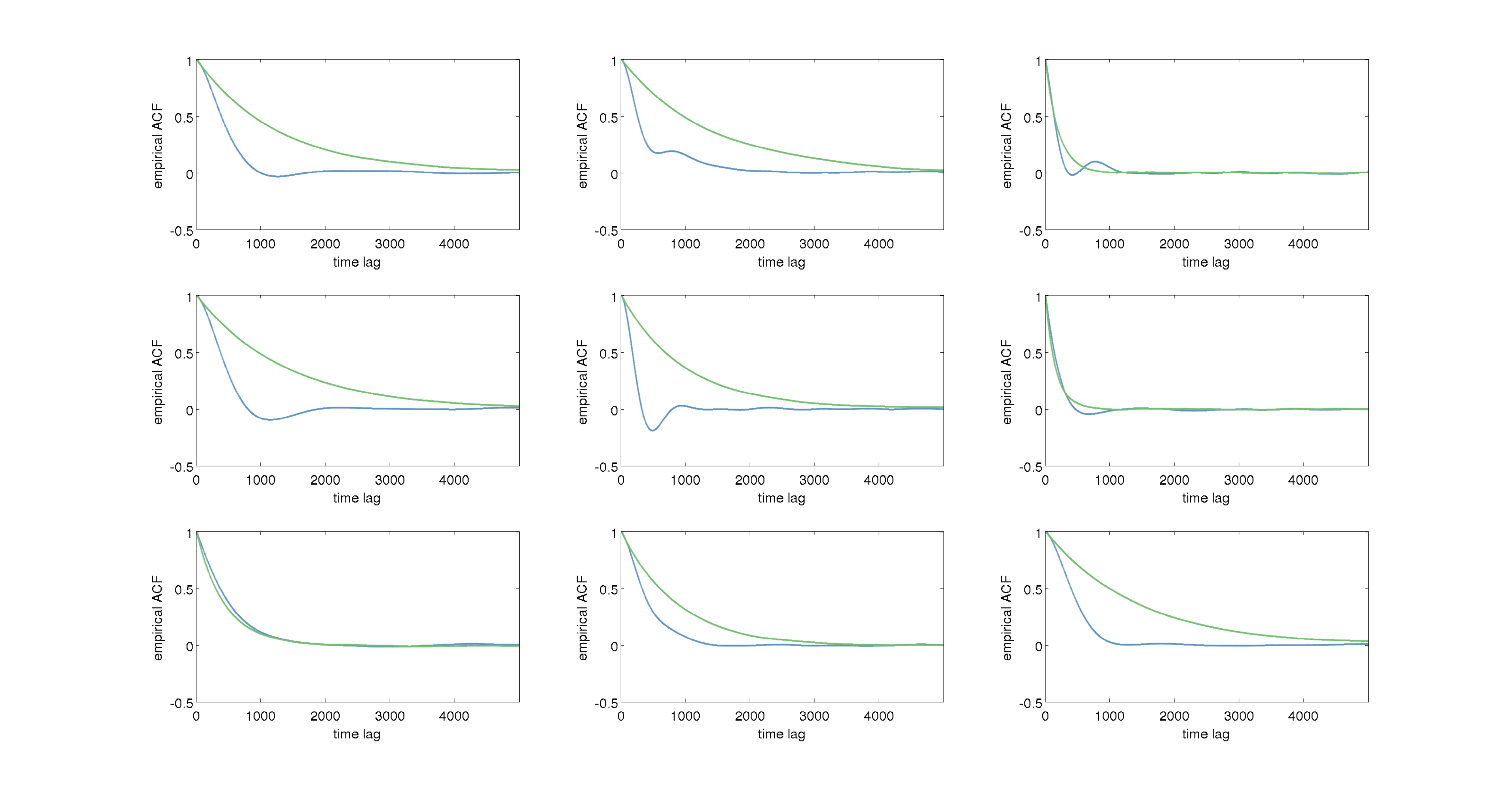}
\caption{Empirical autocorrelation functions for the 9-dimensional example of Section~\ref{sec:9d}.  The blue curve represents the non-reversible Metropolis Hastings method. 
 The green curve represents reversible proposals in conjunction with the usual MH acceptance rule.
 These plots are based on an MCMC trajectory of $10^7$ steps.}
 \label{fig:EACF9d}
\end{figure}

In this case, choosing $c$, $h$ and $\sigma$ as in Remark~\ref{rem:choice_parameters} results in values
\[ c = 0.4313, \quad h =  7.0822 \times 10^{-4} \quad \mbox{and} \quad \sigma = 0.9108.\]
In a numerical example with $10^7$ proposed transitions this leads to acceptance ratios displayed in Table~\ref{tab:acceptance_ratios}. Using the batch means method (by dividing the sample trajectory in $\sqrt{n}$ trajectories of length $\sqrt{n}$ and assuming the $\sqrt{n}$ are independent), we can estimate asymptotic variance. The resulting estimates for asymptotic variance of the different components are given in Table~\ref{tab:asvar}. It should be noted that the notion of asymptotic variance is only defined in case a CLT holds (see \cite{RobertsRosenthal2004}), which strictly speaking is an open question in this setting.

\begin{table}[ht]
 \centering
 \begin{tabular}{l|ccccccccc}
 component & 1 & 2 & 3 & 4 & 5 & 6 & 7 & 8 & 9 \\
 \hline
 NRMH &  599.96 & 661.17 &  40.80 & 572.26 & 159.05 &  27.35 & 230.41 & 401.98 & 718.64 \\
 MH & 1315.3 & 1522.2 & 47.156 & 1473.3 & 876.46 & 28.316 & 204.05 & 708.83 & 1578.2  
 \end{tabular}
 \caption{Estimated asymptotic variances for the 9-dimensional example of Section~\ref{sec:9d}, based on an MCMC trajectory of $10^7$ steps. Note that NRMH improves the (estimated) asymptotic variance for almost all components (and often significantly), except for component \#7.}
 \label{tab:asvar}
\end{table}

It is well known that for optimal convergence in Metropolis Adjusted Langevin (MALA), the stepsize should be tuned so that the ratio of accepted proposals is approximately equal to 0.574 \cite{RobertsRosenthal1998}. Compared to this, the acceptance ratios in our example, given in Table~\ref{tab:acceptance_ratios}, are fairly high. In particular the Metropolis-Hastings chain can be improved significantly by increasing step size $h$, and so we are currently comparing NRMH with a sub-optimal tuning of MH. 

The results of this experimental section should therefore be considered as a proof of concept of NRMH in continuous state spaces, rather than as an advertisement for its immediate practicality. The experiments do illustrate the faster decorrelation of NRMH in comparison to MH (for fixed step-size). It is an open question if the framework of NRMH can be extended so that NRMH would become competitive with optimally tuned MH.

\begin{table}[ht]
\centering
 \begin{tabular}{l|l}
 & acceptance ratio \\
 \hline
 NRMH & 0.7383  \\
 MH &  0.9343
 \end{tabular}
\caption{Acceptance ratios for 9-dimensional example of Section~\ref{sec:9d}, for a sample path consisting of $10^7$ proposals, and with a step size $h = 7.0822 \times 10^{-4}$.}
\label{tab:acceptance_ratios}
\end{table}

\section{Discussion}
\label{sec:discussion}

The efficiency increase of non-reversible Markov chains in MCMC can be significant, in terms of either asymptotic variance or mixing properties, as remarked in this paper. NRMH extends the MCMC-toolbox with a method to utilize these benefits. In particular for continuous state spaces it was, to our knowledge, not known how to construct non-reversible Markov chains for MCMC sampling (taking into account the necessity of a correction step when using time discretization of diffusions). 

Using the theory developed in Section~\ref{sec:continuous_spaces}, NRMH can be applied to general distributions on $\R^n$ as follows.
For a target density function $\pi$, suppose there exists a Gaussian distribution $N(0,V)$ with density function $\pi_0$ on $\R^n$, satisfying $k \pi_0 \leq \pi$ on $\R^n$ for some $k > 0$. Then if $\gamma$ is a suitable vorticity density for sampling from $N(0,V)$, using proposal density $q(x,y)$, we have for $\widetilde \gamma := k \gamma$ that
\[ \widetilde \gamma(x,y) + \pi(x) q(x,y) = k \gamma(x,y) + \pi(x)q(x,y) \geq k (\gamma(x,y) + \pi_0(x) q(x,y)) \geq 0,\]
so that~\eqref{eq:nonnegativity_condition} is satisfied for the combination for this choice of $\pi$, $\widetilde \gamma$ and $q$, and Theorem~\ref{thm:regular_case} applies. Such a suitable choice of $\gamma$ may be determined as described in Section~\ref{sec:nrmh_gaussian}.

The approach outlined in Section~\ref{sec:continuous_spaces} should be considered as a first attempt at implementing the NRMH framework for continuous spaces. As discussed, in order to use the framework one needs to verify the non-negativity condition which leads to technical challenges. In particular we expect that much progress is possible in weakening the conditions of Theorem~\ref{thm:ornstein_uhlenbeck}. The current form of that proposition results in a relatively small step size $h$, which obstructs fast convergence of NRMH. As mentioned before, it is an open question whether NRMH in continuous spaces can be made competitive with optimally tuned MALA.

The theoretical discussion of Section~\ref{sec:advantages} and the numerical experiment of Section~\ref{sec:experiments_continuous} illustrate how efficiency can be improved by employing non-reversible Metropolis-Hastings. 
In view of these encouraging results it will hopefully be possible to extend the result to more general settings. The practical application of NRMH depends on the identification of suitable vorticity structures that are compatible with proposal chains, and establishing these in practical examples provides a promising and challenging direction of research.

Analysis of non-reversible Markov chains is difficult, essentially because self-adjointness is lost. Without self-adjointness, it is much more difficult to connect spectral theory to mixing properties of chains. It seems that a good way of understanding benefits of non-reversible sampling is by studying Cesaro averages (see \cite{Levin2009} and e.g. the result on large deviations in  Section~\ref{sec:large_deviations}). The results of Section~\ref{sec:advantages} which establish that non-reversible chains have better asymptotic variance or large deviations properties, are so far qualitative in nature (i.e. fail to quantify the amount of improvement). To obtain quantitative results is an important challenge that remains to be addressed. Also, it is object of further study how these results carry over to countable and uncountable state spaces. In particular, the question under what conditions the resulting chains are geometrically ergodic and/or satisfy a CLT should be considered.

\subsection*{Acknowledgements}
I am grateful to Prof. Pavliotis (Imperial College, London) for making available the code for computing optimal non-reversible drift (as discussed in \cite{Lelievre2013}). I also wish to acknowledge valuable discussions with Prof. Hilbert J. Kappen (Radboud University), Dr. Kevin Sharp (University of Oxford) and Prof. Gareth Roberts (University of Warwick).

We thank the reviewers and editor for their valuable suggestions which have had a significant impact upon the paper.

\appendix

\section{NRMH in general state spaces}
\label{app:general_state_space}

Let $(S, \mathcal S)$ be a measurable space.
Let $P(x,dy)$ denote a Markov transition kernel and $\pi$ an invariant probability distribution of $P$, i.e. $\int_S P(x, A) \pi(dx) = \pi(A)$ for $A \in \mathcal S$. Define $F_P(dx,dy) := \pi(dx) P(x, dy)$, $B_P(dx,dy) = \pi(dy) P(y, dx)$. Here $F$ and $B$ in $F_P$ and $B_P$ denote `forward' and `backward', respectively. Note that $F_P$ and $B_P$ are probability measures on $S \times S$ with marginal distributions $\pi$.

Define 
\begin{equation} \label{eq:vorticity_measure} \Gamma(dx, dy) := F_P(dx, dy) - B_P(dx, dy) = \pi(dx) P(x,dy) - \pi(dy) P(y, dx).
\end{equation}
 Then $\Gamma$ is a signed measure on $S \times S$, satisfying
\begin{equation} \label{eq:vorticity_measure_condition_1} \Gamma(A \times B) = -\Gamma (B \times A), \quad A, B \in \mathcal S,\end{equation}
and
\begin{equation} \label{eq:vorticity_measure_condition_2} \Gamma(A, S) = 0, \quad A \in \mathcal S.\end{equation}
We will call a signed measure $\Gamma$ on $S \times S$ satisfying~\eqref{eq:vorticity_measure_condition_1},~\eqref{eq:vorticity_measure_condition_2} a \emph{vorticity measure}. If $\Gamma$ is related to $\pi$ and $P$ by~\eqref{eq:vorticity_measure}, it is called the \emph{vorticity measure of $(P,\pi)$}.

Let $Q(x, dy)$ and let $F_Q$ and $B_Q$ as defined above with $P$ replaced by $Q$. Let $\Gamma$ be a vorticity measure. The Markov chain $Q$ will play the role of proposal chain, and $\Gamma$ the role of target vorticity.

\begin{definition}[Absolute continuity of $\Gamma$]
We can use the Jordan decomposition \cite[Section 6.6]{Rudin1987} to decompose $\Gamma$ into two non-signed measures $\Gamma^+ := \half (|\Gamma| + \Gamma)$ and $\Gamma^- = \half (|\Gamma| - \Gamma)$, so that $\Gamma = \Gamma^+ - \Gamma^-$. We say that $\Gamma$ is \emph{absolutely continuous} with respect to some measure $M$ on $S \times S$, denoted by $\Gamma \ll M$, if $\Gamma^- \ll M$ and $\Gamma^+ \ll M$. If $\Gamma \ll M$, we define the Radon-Nikodym derivative of $\Gamma$ with respect to $M$ by
\[ \frac{d \Gamma}{d M}(x,y) = \frac{d \Gamma^+}{d M}(x,y) - \frac{d \Gamma^-}{d M}(x,y), \quad x, y \in S.\]
\end{definition}

Assuming $B_Q \ll F_Q$ and $\Gamma \ll F_Q$, we define the \emph{non-reversible Hastings ratio}
\begin{equation} \label{eq:general_hastings_ratio} R(x,y) := \frac{d \Gamma}{d F_Q}(x,y) + \frac{d B_Q}{d F_Q}(x,y).\end{equation}
In order for $R(x,y)$ to be nonnegative, we have to impose the condition 
\begin{equation} \label{eq:vorticity_nonnegative_condition} \frac{d\Gamma}{d F_Q}(x,y)  + \frac{d B_Q}{d F_Q}(x,y) \geq 0, \quad x, y \in S.\end{equation}

\begin{lemma}
\label{lem:Hastings_ratio_symmetric}
Suppose $Q$ and $\pi$ are such that $F_Q$ and $B_Q$ are equivalent measures on $S \times S$. Suppose that $\Gamma$ is a vorticity measure such that $\Gamma$ is absolutely continuous with respect to $F_Q$. Suppose~\eqref{eq:vorticity_nonnegative_condition} is satisfied. Then for all $x, y \in S$, $R(y,x) \geq 1$ if and only if $R(x,y) \leq 1$.
\end{lemma}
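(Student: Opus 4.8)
The plan is to mirror the discrete-space argument in Lemma~\ref{lem:hastings-ratio}, translating the skew-symmetry of $\Gamma$ and the role swap between forward and backward kernels into the language of Radon-Nikodym derivatives. The key structural facts I would exploit are: (i) by~\eqref{eq:vorticity_measure_condition_1} the vorticity measure satisfies $\Gamma(A \times B) = -\Gamma(B \times A)$, which is the measure-theoretic analogue of $\Gamma(x,y) = -\Gamma(y,x)$; and (ii) the forward and backward kernels are related by $F_Q(dx,dy) = B_Q(dy,dx)$ directly from their definitions $F_Q(dx,dy) = \pi(dx) Q(x,dy)$ and $B_Q(dx,dy) = \pi(dy) Q(y,dx)$. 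Since $F_Q$ and $B_Q$ are assumed equivalent, all the relevant derivatives exist and are finite and positive $F_Q$-a.e.

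First I would record the two ``coordinate-swap'' identities for the densities. Writing $\tau(x,y) = (y,x)$ for the flip map on $S \times S$, the relation $F_Q = B_Q \circ \tau^{-1}$ gives $\frac{d B_Q}{d F_Q}(x,y) = \left(\frac{d F_Q}{d B_Q}\right)(y,x)$, and more usefully
\[
\frac{d B_Q}{d F_Q}(x,y) \cdot \frac{d B_Q}{d F_Q}(y,x) = 1 \quad \text{for } F_Q\text{-a.e. } (x,y),
\]
since flipping twice returns the identity and the two chain-rule factors are reciprocal. Similarly, from skew-symmetry of $\Gamma$ one obtains
\[
\frac{d \Gamma}{d F_Q}(y,x) = -\frac{d \Gamma}{d F_Q}(x,y) \cdot \frac{d B_Q}{d F_Q}(x,y),
\]
because $\Gamma \circ \tau^{-1} = -\Gamma$ while $F_Q \circ \tau^{-1} = B_Q$, so differentiating the flipped vorticity against $F_Q$ picks up an extra factor of $\frac{d B_Q}{d F_Q}$. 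These two identities are the continuous replacements for ``$\Gamma(y,x) = -\Gamma(x,y)$'' and ``$\pi(y)Q(y,x)$ and $\pi(x)Q(x,y)$ trade places.''

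With these in hand, the implication is a short computation. Suppose $R(x,y) \leq 1$, i.e. $\frac{d\Gamma}{dF_Q}(x,y) + \frac{dB_Q}{dF_Q}(x,y) \leq 1$. Writing $a = \frac{d\Gamma}{dF_Q}(x,y)$ and $b = \frac{dB_Q}{dF_Q}(x,y) > 0$, the hypothesis reads $a + b \leq 1$, and using the identities above together with~\eqref{eq:vorticity_nonnegative_condition} (which guarantees $a + b \geq 0$, so $b - (a+b) = -a$ stays controlled) one gets
\[
R(y,x) = \frac{d\Gamma}{dF_Q}(y,x) + \frac{dB_Q}{dF_Q}(y,x) = \frac{1}{b}\left( -a \cdot b + \text{(reciprocal term)} \right) = \frac{1 - (a+b) + 1}{b}\cdots
\]
and the inequality $a + b \leq 1$ transforms, after multiplying through by the positive factor $1/b$, into $R(y,x) \geq 1$; the reverse implication follows by symmetry of the roles of $(x,y)$ and $(y,x)$. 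I expect the main obstacle to be bookkeeping with the ``a.e.'' qualifiers: each density identity holds only $F_Q$-almost everywhere, and since $F_Q$ and $B_Q$ are equivalent the flipped statements hold $B_Q$-a.e.\ as well, so one must check that the exceptional null sets are compatible under $\tau$ before concluding the pointwise equivalence of the two inequalities. Handling this cleanly—rather than the algebra itself—is the delicate part.
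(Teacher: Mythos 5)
Your strategy is essentially the paper's: use skew-symmetry of $\Gamma$ together with the relation $F_Q(dx,dy)=B_Q(dy,dx)$ to rewrite $R(y,x)$ in terms of the densities at $(x,y)$, then compare with the hypothesis $R(x,y)\le 1$ after multiplying by the positive factor $\tfrac{dF_Q}{dB_Q}(x,y)$. The paper routes the coordinate swap through the Jordan decomposition (via $\Gamma^+(A,B)=\Gamma^-(B,A)$) whereas you work with the signed derivative directly through the flip map $\tau$; that difference is cosmetic. Your first identity, $\tfrac{dB_Q}{dF_Q}(x,y)\cdot\tfrac{dB_Q}{dF_Q}(y,x)=1$, is correct.

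However, your second identity is off by a reciprocal, and your concluding computation does not close. Since $\Gamma\circ\tau^{-1}=-\Gamma$ and $F_Q\circ\tau^{-1}=B_Q$, the flipped derivative is naturally taken against $B_Q$, so converting back to $F_Q$ costs a factor $\tfrac{dF_Q}{dB_Q}(x,y)$, not $\tfrac{dB_Q}{dF_Q}(x,y)$: the correct statement is $\tfrac{d\Gamma}{dF_Q}(y,x)=-\tfrac{d\Gamma}{dF_Q}(x,y)\cdot\tfrac{dF_Q}{dB_Q}(x,y)$. (Check against the discrete case: $\Gamma(y,x)/(\pi(y)Q(y,x))=-\Gamma(x,y)/(\pi(x)Q(x,y))\cdot \pi(x)Q(x,y)/(\pi(y)Q(y,x))$.) Writing $a=\tfrac{d\Gamma}{dF_Q}(x,y)$ and $b=\tfrac{dB_Q}{dF_Q}(x,y)>0$, the corrected identity gives in one line $R(y,x)=(-a+1)/b=(1-a)/b$, which is $\ge 1$ precisely when $a+b\le 1$, i.e.\ when $R(x,y)\le 1$; this single equivalence yields both directions of the lemma, and condition~\eqref{eq:vorticity_nonnegative_condition} is not actually needed for it. Your displayed expression $\bigl(1-(a+b)+1\bigr)/b=(2-a-b)/b$ is not the right quantity and is not $\ge 1$ in general under $a+b\le 1$ (take $a=-3$, $b=4$), so the argument as written fails at the last step even though the fix is immediate. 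Your caveat about the almost-everywhere status of the density identities is legitimate (the paper is equally casual about it); the clean resolution is to fix versions of the densities once and for all and read ``for all $x,y$'' as ``outside an $F_Q$-null set that may be taken invariant under $\tau$.''
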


\begin{proof}
Write $\Gamma = \Gamma^+ - \Gamma^-$ for the Jordan decomposition of $\Gamma$, i.e. Since $\Gamma(A,B) = -\Gamma(B,A)$, it follows that $|\Gamma|(A,B) = |\Gamma|(B,A)$ and $\Gamma^+(A,B) = \Gamma^-(B,A)$ for $A, B \in \mathcal S$. Suppose for $x, y\in S$, 
$R(x,y) \leq 1$, so that
\begin{equation}\label{eq:Hastings_ratio_leq_1} 1 \geq \frac{ d (\Gamma + B_Q)}{d F_Q}(x,y) = \frac{d \Gamma^+}{d F_Q}(x,y) - \frac{d \Gamma^-}{d F_Q} (x,y) + \frac {d B_Q}{d F_Q}(x,y).\end{equation}
We compute
\begin{align*} R(y,x) & = \frac{d (\Gamma + B_Q)}{d F_Q}(y,x) = \frac{d \Gamma^+}{d F_Q}(y,x) - \frac{d \Gamma^-}{d F_Q}(y,x) + \frac{d B_Q}{d F_Q}(y,x) \\
 & = \frac{d \Gamma^-}{d B_Q}(x,y) - \frac{d \Gamma^+}{d B_Q}(x,y) + \frac{d F_Q}{d B_Q}(x,y) \\
 & = \frac{d F_Q}{d B_Q}(x,y) \left( \frac{d \Gamma^-}{d F_Q}(x,y) - \frac{d \Gamma^+}{d F_Q}(x,y) + 1 \right) \\
& \geq \frac{d F_Q}{d B_Q}(x,y) \frac {d B_Q}{d F_Q}(x,y) = 1,
\end{align*}
where~\eqref{eq:Hastings_ratio_leq_1} was used to establish the inequality.
\end{proof}

Define the \emph{acceptance probability} by
\begin{equation}
 \label{eq:general_acceptance} 
 A(x,y) = \min(1, R(x,y)), \quad x, y \in S, 
\end{equation}
and define a transition kernel $P$ by
\begin{equation}
\label{eq:general_probability_kernel}
 P(x, dy) = A(x,y) Q(x, dy) + \left( 1 - \int_S A(x,z) Q(x, dz) \right) \delta_x(dy).
\end{equation}

\begin{lemma}
\label{lem:P_has_vorticity_Gamma}
Suppose $Q$ and $\pi$ are such that $F_Q$ and $B_Q$ are equivalent measures on $S \times S$. Suppose that $\Gamma$ is a vorticity measure such that $\Gamma$ is absolutely continuous with respect to $F_Q$. Suppose~\eqref{eq:vorticity_nonnegative_condition} is satisfied. Let $P$ be defined by~\eqref{eq:general_acceptance},~\eqref{eq:general_probability_kernel}. The vorticity measure of $(P, \pi)$ is $\Gamma$.
\end{lemma}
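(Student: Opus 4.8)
The plan is to verify that the signed measure $\Gamma'(dx,dy) := \pi(dx)P(x,dy) - \pi(dy)P(y,dx)$ associated to the kernel $P$ constructed in~\eqref{eq:general_probability_kernel} coincides with the target vorticity measure $\Gamma$. First I would note that the Dirac terms $\delta_x(dy)$ in the definition of $P$ contribute only to the diagonal $\{x=y\}$ of $S \times S$, and by the skew-symmetry required of any vorticity measure (condition~\eqref{eq:vorticity_measure_condition_1}), $\Gamma$ must vanish on the diagonal; likewise $\Gamma'$ vanishes there by its own antisymmetry. Hence it suffices to compare $\Gamma$ and $\Gamma'$ on the off-diagonal, where $P(x,dy) = A(x,y)\,Q(x,dy)$. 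This reduces the problem to showing
\[ \pi(dx)\,A(x,y)\,Q(x,dy) - \pi(dy)\,A(y,x)\,Q(y,dx) = \Gamma(dx,dy) \]
as signed measures off the diagonal.

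The key step is to rewrite this identity in terms of the forward and backward measures $F_Q(dx,dy) = \pi(dx)Q(x,dy)$ and $B_Q(dx,dy) = \pi(dy)Q(y,dx)$, so that the left-hand side becomes $A(x,y)\,F_Q(dx,dy) - A(y,x)\,B_Q(dx,dy)$. Working on the region where $R(x,y) \leq 1$ (so $A(x,y) = R(x,y)$ and, by Lemma~\ref{lem:Hastings_ratio_symmetric}, $A(y,x) = 1$), I would differentiate against $F_Q$ using the Radon–Nikodym derivatives. Since $R(x,y) = \frac{d\Gamma}{dF_Q} + \frac{dB_Q}{dF_Q}$ by~\eqref{eq:general_hastings_ratio}, we get $A(x,y)\,F_Q = R(x,y)\,F_Q = \Gamma + B_Q$ on this region (as measures), while $A(y,x)\,B_Q = B_Q$. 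Subtracting yields exactly $\Gamma$. The complementary region $R(x,y) \geq 1$ is handled by the symmetric argument, again invoking Lemma~\ref{lem:Hastings_ratio_symmetric} to control the reverse acceptance probability; on the overlap $R(x,y)=1$ both computations agree. This mirrors precisely the discrete-space proof of Lemma~\ref{lem:P_Gamma_has_vorticity_Gamma}.

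The main obstacle will be the measure-theoretic bookkeeping rather than any conceptual difficulty: I must justify manipulating Radon–Nikodym derivatives pointwise on the sets $\{R \leq 1\}$ and $\{R \geq 1\}$ and argue that the resulting equalities of densities against $F_Q$ genuinely yield equality of the signed measures $\Gamma'$ and $\Gamma$ when integrated over arbitrary rectangles $A \times B$. The equivalence of $F_Q$ and $B_Q$ is what licenses switching between the two reference measures and guarantees $\frac{dF_Q}{dB_Q}\cdot\frac{dB_Q}{dF_Q} = 1$ almost everywhere, as already exploited in the proof of Lemma~\ref{lem:Hastings_ratio_symmetric}. Once the off-diagonal identity is established on all measurable rectangles, the fact that such rectangles generate the product $\sigma$-algebra, combined with the vanishing of both measures on the diagonal, gives $\Gamma' = \Gamma$ on all of $S \times S$, which is the claim.
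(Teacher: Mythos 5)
Your proposal is correct and follows essentially the same route as the paper: dispose of the Dirac (rejection) term — the paper does this by observing that the measure $\bigl(1 - \int_S A(x,z)Q(x,dz)\bigr)\delta_x(dy)\pi(dx)$ is symmetric in $x$ and $y$ and hence cancels in the difference, which is equivalent to your diagonal argument but avoids needing the diagonal to be measurable — and then transcribe the discrete-space computation of Lemma~\ref{lem:P_Gamma_has_vorticity_Gamma} into Radon--Nikodym language on the regions $\{R\leq 1\}$ and $\{R\geq 1\}$ via Lemma~\ref{lem:Hastings_ratio_symmetric}. In fact you spell out the density computation that the paper only gestures at (``no conceptual difficulties left''), so your write-up is, if anything, more complete.
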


\begin{proof}
We should check that for $A, B \in \mathcal S$,
\begin{equation} \label{eq:verify_vorticity} \Gamma(A, B) =  \int_A \left( \int_B P(x, dy) \right) \pi(dx) - \int_B \left( \int_A P(y, dx) \right) \pi (dy).\end{equation}

Note that, for some measurable function $h : S \rightarrow \R$,
\[ \int_A \left( \int_B  h(x) \delta_x(dy)\right) \pi (dx) = \int_A h(x) \1_{x \in B} \, \pi (dx) = \int_{A \cap B} h(x) \, \pi(dx).\]
This shows that the measure $\left( 1 - \int_S A(x,z) Q(x, dz) \right) \delta_x(dy) \pi(dx)$ in~\eqref{eq:general_probability_kernel} is symmetric with respect to interchanging $x$ and $y$, and therefore has no contribution in the verification of~\eqref{eq:verify_vorticity}.

Using Lemma~\ref{lem:Hastings_ratio_symmetric} there are no conceptual difficulties left in verifying that
\[  \int_A \left( \int_B P(x, dy) \right) \pi(dx) - \int_B \left( \int_A P(y, dx) \right) \pi (dy) = \Gamma(A,B),\]
in a similar way as in the proof of Lemma~\ref{lem:P_Gamma_has_vorticity_Gamma}.
\end{proof}

\begin{theorem}
\label{thm:general_case}
Suppose $Q$ and $\pi$ are such that $F_Q$ and $B_Q$ are equivalent measures on $S \times S$. Suppose that $\Gamma$ is a vorticity measure such that $\Gamma$ is absolutely continuous with respect to $F_Q$. Suppose~\eqref{eq:vorticity_nonnegative_condition} is satisfied. Let $P$ be defined by~\eqref{eq:general_acceptance},~\eqref{eq:general_probability_kernel}. Then $\pi$ is invariant for $P$ and $\Gamma$ is the vorticity measure of $(P,\pi)$.
\end{theorem}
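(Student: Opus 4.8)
Theorem~\ref{thm:general_case} asserts that, under the stated hypotheses, $\pi$ is invariant for $P$ and $\Gamma$ is the vorticity measure of $(P,\pi)$.

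The plan is to reduce the theorem to the two preceding lemmas together with the defining property~\eqref{eq:vorticity_measure_condition_2} of a vorticity measure, in exact analogy with the finite state space argument of Lemma~\ref{lem:vorticity_observations}(iii). The assertion that $\Gamma$ is the vorticity measure of $(P,\pi)$ is precisely the content of Lemma~\ref{lem:P_has_vorticity_Gamma}, so the only genuinely new point to establish is the invariance of $\pi$.

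First I would record that $P$ is a bona fide Markov kernel. Since $0 \leq A(x,y) \leq 1$, the coefficient $1 - \int_S A(x,z) Q(x,dz)$ of $\delta_x$ in~\eqref{eq:general_probability_kernel} is nonnegative, and the total mass of $P(x,\cdot)$ is $\int_S A(x,y) Q(x,dy) + \left( 1 - \int_S A(x,z) Q(x,dz) \right) = 1$. In particular $\int_S P(x,dy) = 1$ for every $x \in S$, which is the fact that drives the marginal computation below.

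Next I would invoke Lemma~\ref{lem:P_has_vorticity_Gamma} to obtain $\Gamma(A \times B) = F_P(A \times B) - B_P(A \times B)$ for all $A, B \in \mathcal S$. Specializing to $B = S$ and using that $P(x,\cdot)$ is a probability measure gives $F_P(A \times S) = \int_A \left( \int_S P(x,dy) \right) \pi(dx) = \pi(A)$, while $B_P(A \times S) = \int_S P(y, A) \, \pi(dy) = (\pi P)(A)$. Hence $\Gamma(A \times S) = \pi(A) - (\pi P)(A)$. But $\Gamma$ is a vorticity measure, so~\eqref{eq:vorticity_measure_condition_2} forces $\Gamma(A \times S) = 0$; therefore $\pi(A) = (\pi P)(A)$ for every $A \in \mathcal S$, i.e. $\pi$ is invariant for $P$.

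There is no serious obstacle here: all the analytic work, namely the symmetry of the Hastings ratio around the threshold $1$ and the resulting identity for the vorticity, has already been absorbed into Lemmas~\ref{lem:Hastings_ratio_symmetric} and~\ref{lem:P_has_vorticity_Gamma}. The only thing to be careful about is the bookkeeping of the two coordinates in $F_P$ and $B_P$, and ensuring the forward marginal collapses via $\int_S P(x,dy) = 1$; once that is in place, the invariance is simply the measure-theoretic transcription of the equivalence ``$\Gamma \1 = 0$ if and only if $\pi$ is invariant'' from Lemma~\ref{lem:vorticity_observations}(iii).
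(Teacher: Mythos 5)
Your proof is correct and follows the same route as the paper: cite Lemma~\ref{lem:P_has_vorticity_Gamma} for the vorticity identity, then set $B = S$ in $\Gamma(A \times B) = F_P(A\times B) - B_P(A\times B)$ and use~\eqref{eq:vorticity_measure_condition_2} to conclude $\pi(A) = \int_S P(y,A)\,\pi(dy)$. The extra bookkeeping you supply (that $P(x,\cdot)$ has total mass one, so the forward marginal collapses to $\pi(A)$) is exactly the step the paper leaves implicit.
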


\begin{proof}
Lemma~\ref{lem:P_has_vorticity_Gamma} establishes that $\Gamma$ is the vorticity measure of $(P,\pi)$. By~\ref{eq:vorticity_measure_condition_2}, for $A \in \mathcal S$,
\[ 0 = \Gamma(A,S) = \pi(A) - \int_S P(y, A) \pi(d y),\]
so that $\pi$ is invariant for $P$.
\end{proof}

\begin{proof}[Proof of Theorem~\ref{thm:regular_case}]
Define the signed measure $\Gamma$, and measures $F_Q$ and $B_Q$ on $S \times S$ by
\begin{align*} \Gamma(dx, dy) &= \gamma(x,y) \ dx \, dy, \\
 F_Q(dx, dy) & = \pi(dx) Q(x, dy), \quad B_Q(dx,dy) = \pi(dy) Q(y, dx).
\end{align*}
Then by~\eqref{eq:vorticity_kernel_condition_1},~\eqref{eq:vorticity_kernel_condition_2}, $\Gamma$ is a vorticity measure, and by assumptions~\eqref{eq:nonsingularity_condition_1} and~\eqref{eq:nonsingularity_condition_2}, $F_Q$ and $B_Q$ are equivalent measures, and $\Gamma$ is absolutely continuous with respect to $F_Q$. 
Furthermore $R(x,y)$ is a version of $\frac{d \Gamma}{d F_Q} + \frac{d B_Q}{d F_Q}$, and by assumption~\eqref{eq:nonnegativity_condition}, we have $\frac{d \Gamma}{d F_Q} + \frac{d B_Q}{d F_Q} \geq 0$. We see that all conditions of Theorem~\ref{thm:general_case} are satisfied, so that the stated results follow.
\end{proof}

\section{Proof of Theorem~\ref{thm:ornstein_uhlenbeck}}
\label{app:proof_ornstein_uhlenbeck}

Define an inner product $\langle \cdot, \cdot \rangle_V$ on $\R^n$ by $\langle x, y \rangle_V = \langle x, V^{-1} y\rangle$, where $\langle \cdot,\cdot \rangle$ is the Euclidean inner product. Let $\|\cdot\|_V$ denote the induced norm. Let $B :=  -(I+S)V^{-1}$.

\begin{lemma}
\label{lem:spectral-radius}
Suppose 
\begin{equation} \label{eq:stepsize} 0 < h < \frac 2 {\| V^{-1/2} (I-S^2) V^{-1/2} \|}.
\end{equation}
Then $\mathfrak{r}(I + h B) < 1$.
\end{lemma}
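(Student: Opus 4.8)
The plan is to show that $I+hB$ is a strict contraction with respect to the inner product $\langle\cdot,\cdot\rangle_V$, which immediately yields $\mathfrak{r}(I+hB)\le\|I+hB\|_V<1$, since the spectral radius is dominated by any operator norm. Writing $M:=I+hB=I-h(I+S)V^{-1}$ and $u:=V^{-1}x$, so that $Mx=x-h(I+S)u$, I would expand $\|Mx\|_V^2=\langle Mx,\,V^{-1}Mx\rangle$ and use the skew-symmetry of $S$ to cancel the first-order cross terms. Concretely, since $\langle u,Su\rangle=0$, both mixed terms reduce to $-h\|u\|^2$, giving
\[ \|Mx\|_V^2 = \|x\|_V^2 - 2h\|u\|^2 + h^2\,\langle (I+S)u,\, V^{-1}(I+S)u\rangle. \]
The last term equals $\langle u,(I-S)V^{-1}(I+S)u\rangle$ after transposing $I+S$ (using $(I+S)'=I-S$), so it is bounded by $C\|u\|^2$ with $C:=\|(I-S)V^{-1}(I+S)\|$, and I obtain $\|Mx\|_V^2\le\|x\|_V^2-h(2-hC)\|u\|^2$.

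The main obstacle, and the only non-routine point, is to identify the constant $C$ with the quantity $\|V^{-1/2}(I-S^2)V^{-1/2}\|$ appearing in the hypothesis~\eqref{eq:stepsize}. For this I would first observe that $(I-S)V^{-1}(I+S)$ is symmetric (its transpose is itself) and positive definite (the associated quadratic form is $\|(I+S)u\|_V^2$, and $I+S$ is invertible because the eigenvalues of the real skew-symmetric $S$ are purely imaginary), so its operator norm coincides with its spectral radius. Then, using repeatedly that $\mathfrak{r}(XY)=\mathfrak{r}(YX)$ together with $(I+S)(I-S)=I-S^2$, I would compute $\mathfrak{r}((I-S)V^{-1}(I+S))=\mathfrak{r}(V^{-1}(I-S^2))=\mathfrak{r}(V^{-1/2}(I-S^2)V^{-1/2})$. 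Since $I-S^2=I+S'S$ is symmetric positive definite, the last matrix is symmetric positive definite and its spectral radius is again its operator norm; hence $C=\|V^{-1/2}(I-S^2)V^{-1/2}\|$, exactly the constant in~\eqref{eq:stepsize}.

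Finally, under the hypothesis $0<h<2/C$ we have $2-hC>0$, so $\|Mx\|_V^2\le\|x\|_V^2-h(2-hC)\|u\|^2<\|x\|_V^2$ for every $x\ne0$ (note that $u=V^{-1}x\ne0$ whenever $x\ne0$). By continuity of $x\mapsto\|Mx\|_V/\|x\|_V$ and compactness of the unit $\|\cdot\|_V$-sphere, this strict inequality is uniform, so $\|I+hB\|_V<1$ and therefore $\mathfrak{r}(I+hB)<1$, as claimed. I expect every step here to be routine once the norm identity $C=\|V^{-1/2}(I-S^2)V^{-1/2}\|$ is in hand, so I would present that identity as the crux of the argument.
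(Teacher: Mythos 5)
Your proof is correct, and it takes a genuinely different (though closely related) route from the paper's. The paper argues eigenvalue by eigenvalue: it takes $\lambda\in\sigma(I+hB)$ with a $\|\cdot\|_V$-normalized eigenvector, computes $\Re\lambda=1-h\|V^{-1}x\|^2$ and $|\Im\lambda|=h|\langle SV^{-1}x,V^{-1}x\rangle|$ using skew-symmetry, and then bounds $|\lambda|^2$ via Cauchy--Schwarz, arriving at the same quadratic form $\langle V^{-1}x,(I-S^2)V^{-1}x\rangle$ and hence the same constant $\|V^{-1/2}(I-S^2)V^{-1/2}\|$. You instead prove the stronger statement $\|I+hB\|_V<1$ and invoke $\mathfrak{r}\le\|\cdot\|_V$; the skew-symmetry cancellation of the cross terms plays the identical role, but your identification of the constant goes through the symmetric positive definite matrix $(I-S)V^{-1}(I+S)$ and the identities $\mathfrak{r}(XY)=\mathfrak{r}(YX)$ and $(I+S)(I-S)=I-S^2$, all of which check out (including the invertibility of $I+S$ from the purely imaginary spectrum of $S$). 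What your approach buys: a norm contraction rather than merely a spectral bound, and it sidesteps the complex-eigenvector bookkeeping implicit in the paper's numerical-range computation; you could even make the contraction quantitative by noting $\|V^{-1}x\|^2\ge\|x\|_V^2/\|V\|$ instead of appealing to compactness. What the paper's approach buys: the intermediate condition on $h$ before the Cauchy--Schwarz step is eigenvector-dependent and in principle slightly sharper, though both proofs land on the same final hypothesis~\eqref{eq:stepsize}.
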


\begin{proof}
Suppose $\lambda \in \sigma(I + h B)$ and let $x \in \C^n$, $x \neq 0$, be an eigenvector corresponding to $\lambda$. Without loss of generality assume that $\|x\|_V = \|V^{-1/2} x\| = 1$.
We have
\[ \lambda = \langle (I+h B) x, x \rangle_V = \| x\|_V^2 - h \langle (I+S)V^{-1} x, V^{-1}x \rangle.\]
By skew-symmetry of $S$,
\begin{align*}  \Re \lambda & = \| V^{-1/2} x\|^2 - h \|V^{-1} x\|^2 = 1 - h \|V^{-1} x \|^2,
\end{align*}
and 
\begin{align*}
|\Im \lambda| & = h  | \langle S V^{-1} x, V^{-1} x \rangle |.
\end{align*}
Hence
\begin{align*} |\lambda|^2 & = \left( \| V^{-1/2} x \|^2 - h \|V^{-1} x \|^2 \right)^2 + h^2 \langle S V^{-1} x, V^{-1} x \rangle|^2, \\
 & = 1 - 2 h \|V^{-1} x \|^2 + h^2 \left( \|V^{-1} x\|^4 + \langle S V^{-1} x, V^{-1} x \rangle^2 \right).
\end{align*}
Hence the requirement $|\lambda| < 1$ translates into the inequality
\[ h^2 \left( \|V^{-1} x\|^4 + \langle S V^{-1} x, V^{-1} x \rangle^2 \right) < 2 h \|V^{-1} x \|^2,\]
or, after some rearranging,
\begin{equation} \label{eq:stepsize_intermediate} h < \frac{2}{\left( \|V^{-1} x\|^2 + \frac{\langle  S V^{-1} x, V^{-1} x \rangle^2}{\|V^{-1} x\|^2} \right) }.\end{equation}
Looking at the denominator, using Cauchy-Schwartz,
\begin{align*} \|V^{-1} x\|^2 + \frac{\langle  S V^{-1} x, V^{-1} x \rangle^2}{\|V^{-1} x\|^2}  
 & \leq \|V^{-1} x \|^2 + \|S V^{-1} x\|^2 = \langle V^{-1} x, V^{-1} x \rangle + \langle S V^{-1} x, S V^{-1} x \rangle \\
 & = \langle V^{-1} x, (I - S^2) V^{-1} x \rangle = \langle V^{-1/2} x, V^{-1/2} (I-S^2) V^{-1/2} V^{-1/2} x \rangle \\
& \leq \| V^{-1/2} (I-S^2) V^{-1/2} \| \|V^{-1/2} x \|^2 = \| V^{-1/2} (I-S^2) V^{-1/2} \|.
\end{align*}
It follows that if $h$ satisfies~\eqref{eq:stepsize}, then it satisfies~\eqref{eq:stepsize_intermediate}, and therefore $\mathfrak{r}(I + h B) < 1$.
\end{proof}

If~\eqref{eq:stepsize} holds, by \cite[Theorem 13.2.1]{LancasterTismenetsky1985} there exists a unique solution $R = R(\sigma)$ to the discrete Lyapunov equation~\eqref{eq:discrete_lyapunov}. Recall $f(x,y) = \rho(x) q(x,y)$, where $\rho$ is the density of $N(0, R)$ and $q(x,\cdot)$ the density function of $N((I+hB)x, (2h \sigma^2) I)$. Hence $f$ is a Gaussian density function with mean zero and covariance matrix
\begin{equation} \label{eq:M} M := \begin{pmatrix} R & R(I+h B') \\ (I+h B) R & 2 h \sigma^2 I + (I + h B) R(I+h B') \end{pmatrix} = \begin{pmatrix} R & R(I+h B') \\ (I+h B) R & R \end{pmatrix}.\end{equation}

\begin{lemma}
 \label{lem:det_M}
Suppose~\eqref{eq:stepsize} holds. Then 
$\det M = (2h \sigma^2)^n \det (R)$
\end{lemma}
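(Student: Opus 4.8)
The plan is to compute $\det M$ via the Schur complement formula applied to the top-left block, exploiting that the bottom-right block equals $R$ precisely because of the discrete Lyapunov equation~\eqref{eq:discrete_lyapunov}. First I would record that under~\eqref{eq:stepsize} Lemma~\ref{lem:spectral-radius} gives $\mathfrak r(I+hB) < 1$, so the unique solution $R$ of~\eqref{eq:discrete_lyapunov} is positive definite (it is the convergent sum $\sum_{k \geq 0}(I+hB)^k (2h\sigma^2 I)(I+hB')^k$) and in particular invertible. This invertibility of the top-left block is the only hypothesis needed to apply the standard block-determinant identity.

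Next I would invoke
\[ \det \begin{pmatrix} A & B \\ C & D \end{pmatrix} = \det(A)\,\det(D - C A^{-1} B)\]
valid whenever $A$ is invertible, with $A = R$, $B = R(I+hB')$, $C = (I+hB)R$, $D = R$ read off from~\eqref{eq:M}. The Schur complement then simplifies cleanly: the two copies of $R$ and the $R^{-1}$ cancel, giving
\[ D - C A^{-1} B = R - (I+hB)\,R\,R^{-1}\,R(I+hB') = R - (I+hB) R (I+hB').\]

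The key step is to recognize the right-hand side from the Lyapunov equation~\eqref{eq:discrete_lyapunov}, which rearranges exactly to $R - (I+hB)R(I+hB') = 2h\sigma^2 I$. Substituting this in yields
\[ \det M = \det(R)\,\det(2h\sigma^2 I) = (2h\sigma^2)^n \det(R),\]
since $2h\sigma^2 I$ is a scalar multiple of the $n\times n$ identity. I do not expect any real obstacle here: the entire content is the cancellation in the Schur complement together with the identity from~\eqref{eq:discrete_lyapunov}, and the only side condition to check is the invertibility of $R$, which is already guaranteed by~\eqref{eq:stepsize} through Lemma~\ref{lem:spectral-radius}.
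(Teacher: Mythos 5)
Your proposal is correct and follows essentially the same route as the paper: the block-determinant (Schur complement) identity with the top-left block $R$, followed by recognizing the Lyapunov equation~\eqref{eq:discrete_lyapunov} in the resulting expression $R - (I+hB)R(I+hB') = 2h\sigma^2 I$. Your explicit remark that $R$ is invertible (positive definite, via Lemma~\ref{lem:spectral-radius}) is a small justification the paper leaves implicit, but otherwise the arguments coincide.
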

\begin{proof}
By standard result on determinants of block matrices,
\begin{align*} \det M & = \det \begin{pmatrix} R & R(I+h B') \\ (I+h B)R & R \end{pmatrix} = \det (R) \det \left( R - (I+h B) R R^{-1} R(I+h B') \right) \\
& = \det (R) \det (R - (I+h B) R(I+h B')).\end{align*}
In the argument of the second determinant we recognize~\eqref{eq:discrete_lyapunov}, from which we obtain
\[ \det M = \det( R) \det(2h \sigma^2 I_n) = (2h \sigma^2)^n \det(R).\]
\end{proof}

Let $\preceq$ denote the partial ordering of positive definite matrices, i.e. $A \succeq B$ if $A - B$ is positive semidefinite.
\begin{lemma}
Suppose~\eqref{eq:stepsize} holds. Then 
\label{lem:lower_bound_R}
 $R \succeq \sigma^2 V.$
\end{lemma}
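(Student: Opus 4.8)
The plan is to turn the discrete Lyapunov equation into a continuous-time one and compare it with $V$, which itself solves a continuous Lyapunov equation. First I would record that identity for $V$: since $B = -(I+S)V^{-1}$ and $S' = -S$, one computes $BV = -(I+S)$ and $VB' = (BV)' = -(I-S)$, so that
\[ BV + VB' = -2I. \]
Next I would rewrite the discrete Lyapunov equation~\eqref{eq:discrete_lyapunov} in incremental form. Expanding $(I+hB)R(I+hB')$ and subtracting $R$, the equation $R = 2h\sigma^2 I + (I+hB)R(I+hB')$ becomes, after dividing by $h$,
\[ BR + RB' = -2\sigma^2 I - h\,BRB'. \]

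Now set $D := R - \sigma^2 V$. Subtracting $\sigma^2$ times the identity for $V$ from the incremental form for $R$, the terms $-2\sigma^2 I$ cancel and I obtain the clean continuous Lyapunov equation
\[ BD + DB' = -h\,BRB'. \]
Because $R \succ 0$ and $B$ is invertible (the eigenvalues of the skew-symmetric $S$ are purely imaginary, so $I+S$, and hence $B$, is nonsingular), the right-hand side $h\,BRB'$ is positive semidefinite.

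It then remains to argue that $B$ is stable, i.e. $\mathfrak{s}(B) < 0$, so that the solution of $BD + DB' = -h\,BRB'$ admits the integral representation $D = \int_0^\infty e^{Bt}\,(h\,BRB')\,e^{B't}\,dt$, which is manifestly positive semidefinite; this yields $R \succeq \sigma^2 V$. Stability of $B$ holds unconditionally: the existence of a positive definite $V$ solving $BV + VB' = -2I$ with negative definite right-hand side forces $\sigma(B)$ into the open left half-plane (converse Lyapunov theorem), consistent with the bound ${\mathfrak s}(B) \le {\mathfrak s}(-V^{-1}) < 0$ noted earlier. I expect the only real care to lie in this last step---verifying stability and thereby the validity and positivity of the integral formula for $D$---while the algebraic reductions are routine.
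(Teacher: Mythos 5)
Your proof is correct and follows essentially the same route as the paper: both convert the discrete Lyapunov equation into the continuous one $BR+RB' = -(2\sigma^2 I + hBRB')$, compare with $BV+VB'=-2I$, and conclude via the positive-semidefinite integral representation (you subtract $\sigma^2 V$ first and solve for the difference $D$, the paper integrates $R$ directly and uses monotonicity of the integrand — a cosmetic difference). Your explicit treatment of the stability of $B$ is a point the paper leaves implicit in its citation of the solution formula for the continuous Lyapunov equation, and is a welcome addition.
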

\begin{proof}
Expanding~\eqref{eq:discrete_lyapunov} gives
\[ R = 2 h \sigma^2 I + R + h BR + h R B' + h^2 B R B',\] or equivalently 
\[ 0 = 2 \sigma^2 I + (BR + R B') + h BR B' = 2 \sigma^2 I + (BR + R B') + T,\] where $T := h BRB' \succeq 0$.
It follows that $R$ satisfies the continuous time Lyapunov equation
\[ BR + RB' = -(2 \sigma^2 I + T)\]
with solution (see e.g. \cite[Theorem 13.1.1]{LancasterTismenetsky1985})
\[ R = \int_0^{\infty} e^{B s}(2 \sigma^2 I+T) e^{B's} \ ds \succeq \int_0^{\infty} e^{Bs}(2 \sigma^2 I) e^{B's} \ d s = \sigma^2 V,\]
where the last equality follows because $V$ satisfies the continuous time Lyapunov equation 
$B V + V B' = - 2 I$.
\end{proof}

\begin{lemma}
\label{lem:R_estimates}
\begin{itemize}
\item[(i)] For all $x \in \R^n$, $\langle x, B x\rangle_V \leq -  \|x\|^2_V / \| V\|$.
\item[(ii)] For all $s \geq 0$, $\| e^{Bs}\|_V \leq e^{-s / \| V\|}$.
\end{itemize}
Define constants $0 \leq C_1 < C_2$ by~\eqref{eq:constants}.
\begin{itemize}
\item[(iii)] For 
$h < \frac{2}{C_2}$ 
we have 
\[ \| B R  B'\| \leq \frac { 2 \sigma^2 C_1}{2 - h C_2}.\]
\item[(iv)] For $h < \frac{2}{C_2}$, we have
\[ R \preceq \sigma^2 \left( \frac{ 2 - h (C_2 - C_1)}{2 - h C_2} \right) V.\]
\end{itemize}

\end{lemma}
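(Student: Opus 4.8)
The plan is to dispatch (i) and (ii) by direct computation, then treat (iii) as the crux via a self-consistency argument for $BRB'$, and finally obtain (iv) as a short corollary of (iii). For (i) I would expand $\langle x, Bx\rangle_V = \langle V^{-1}x, -(I+S)V^{-1}x\rangle = -\|V^{-1}x\|^2 - \langle V^{-1}x, SV^{-1}x\rangle$; the last term vanishes since $S$ is skew-symmetric and the form is evaluated on a single real vector, leaving $\langle x, Bx\rangle_V = -\|V^{-1}x\|^2$. It then remains to note $\|V^{-1}x\|^2 = \langle V^{-1/2}x, V^{-1}V^{-1/2}x\rangle \geq \|V^{-1/2}x\|^2/\|V\| = \|x\|_V^2/\|V\|$, using that the smallest eigenvalue of $V^{-1}$ is $1/\|V\|$. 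For (ii) I differentiate $t(s) := \|e^{Bs}x\|_V^2$, obtaining $t'(s) = 2\langle B e^{Bs}x, e^{Bs}x\rangle_V \leq -\tfrac{2}{\|V\|}t(s)$ by (i); Gr\"onwall's inequality gives $t(s) \leq e^{-2s/\|V\|}t(0)$, and taking square roots and a supremum over $x$ yields the stated operator-norm bound.

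Part (iii) is the heart. Writing $\mathcal L(X) := \int_0^\infty e^{Bs}X e^{B's}\,ds$, the discrete Lyapunov equation rearranges (as in the proof of Lemma~\ref{lem:lower_bound_R}) to $BR + RB' = -(2\sigma^2 I + hBRB')$, so that $R = \mathcal L(2\sigma^2 I + hBRB') = \sigma^2 V + h\,\mathcal L(BRB')$, where I used $\mathcal L(2I) = V$ coming from $BV + VB' = -2I$. Setting $W := BRB' \succeq 0$ gives the self-referential identity $W = \sigma^2 BVB' + hB\,\mathcal L(W)B'$. Since $W$ is symmetric positive semidefinite, I would bound its norm through the quadratic form: for a unit vector $x$,
\[ \langle x, B\,\mathcal L(W)B'x\rangle = \int_0^\infty \langle e^{B's}B'x, W e^{B's}B'x\rangle\,ds \leq \|W\| \int_0^\infty \|e^{B's}B'x\|^2\,ds. \]
The key computation is that $\int_0^\infty \|e^{B's}y\|^2\,ds = \langle y, \mathcal L(I)y\rangle = \tfrac12 \langle y, Vy\rangle$, again from $\mathcal L(2I)=V$; with $y = B'x$ this equals $\tfrac12\langle x, BVB'x\rangle$. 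Hence $\langle x, Wx\rangle \leq (\sigma^2 + \tfrac{h\|W\|}{2})\langle x, BVB'x\rangle$, and taking the supremum over $\|x\| = 1$ yields $\|W\| \leq \tfrac{2\sigma^2 \|BVB'\|}{2 - h\|BVB'\|}$, valid because $\|BVB'\| \leq C_2 < 2/h$.

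It then remains to replace $\|BVB'\|$ by $C_1$ and $C_2$. Here I would use that $BVB' = (I+S)V^{-1}(I+S)'$ is symmetric positive semidefinite, so $\|BVB'\| = \mathfrak r(BVB')$. Since $V^{-1/2}(BVB')V^{1/2}$ is similar to $BVB'$, it has the same spectral radius, whence $C_1 = \|V^{-1/2}BVB'V^{1/2}\| \geq \mathfrak r(V^{-1/2}BVB'V^{1/2}) = \|BVB'\|$; and a Rayleigh-quotient comparison (the quotient $\langle u, BVB'u\rangle/\langle u, Vu\rangle$ versus $\langle u,BVB'u\rangle/(\|V\|\langle u,u\rangle)$) gives $C_2 = \|V^{-1/2}BVB'V^{-1/2}\|\,\|V\| \geq \|BVB'\|$. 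Bounding the numerator using $\|BVB'\| \leq C_1$ and the denominator below using $\|BVB'\| \leq C_2$ then gives $\|BRB'\| = \|W\| \leq \tfrac{2\sigma^2 C_1}{2 - hC_2}$, which is (iii). Finally (iv) is immediate: from $R = \sigma^2 V + h\,\mathcal L(W)$ together with $W \preceq \|W\| I$, monotonicity of $\mathcal L$ on the positive semidefinite cone, and $\mathcal L(I) = \tfrac12 V$, one obtains $R \preceq (\sigma^2 + \tfrac{h\|W\|}{2})V$; substituting the bound from (iii) yields $R \preceq \sigma^2 \tfrac{2 - h(C_2 - C_1)}{2 - hC_2}V$.

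The main obstacle is precisely the matching step in (iii): the flows $e^{Bs}$ contract in the $\|\cdot\|_V$ geometry established in (i)--(ii), whereas the constants $C_1, C_2$ are defined through $V^{\pm 1/2}$-conjugations, so a naive operator-norm estimate produces the standard quantities $\|BVB'\|$ and $\|B\|^2\|V\|$ rather than $C_1$ and $C_2$, which need not be ordered against them. The device that resolves this is to estimate the symmetric quadratic form $\langle x, Wx\rangle$ rather than an operator norm directly: this routes the transpose flow $e^{B's}$ through the clean identity $\int_0^\infty \|e^{B's}y\|^2\,ds = \tfrac12\langle y, Vy\rangle$, after which the similarity and congruence comparisons $\|BVB'\| \leq C_1$ and $\|BVB'\| \leq C_2$ supply exactly the desired constants.
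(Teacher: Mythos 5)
Your parts (i), (ii) and (iv) follow the paper essentially verbatim (the paper cites dissipativity/contraction-semigroup theory for (ii) where you run Gr\"onwall, which is the same argument). Part (iii) is where you genuinely diverge. The paper works entirely in the $V$-weighted operator norm: it applies $\|\cdot\|_V$ to the identity $BRB' = B\bigl(\sigma^2 V + h\int_0^\infty e^{Bs}BRB'e^{B's}\,ds\bigr)B'$, uses submultiplicativity together with $\int_0^\infty\|e^{Bs}\|_V^2\,ds\le \|V\|/2$ from (ii), and the constants appear natively as $C_1=\|BVB'\|_V$ and $C_2=\|B\|_V^2\|V\|$ via $\|K\|_V=\|V^{-1/2}KV^{1/2}\|$; the price is that the bound is on $\|BRB'\|_V$ rather than the Euclidean norm. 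You instead estimate the Euclidean quadratic form $\langle x, BRB'x\rangle$, route the transpose flow through the exact identity $\int_0^\infty\|e^{B's}y\|^2\,ds=\tfrac12\langle y,Vy\rangle$ (so part (ii) is not even needed here), obtain the self-consistent bound $\|BRB'\|\le 2\sigma^2\|BVB'\|/(2-h\|BVB'\|)$, and then recover the stated constants from the comparisons $\|BVB'\|=\mathfrak r(BVB')\le C_1$ and $\|BVB'\|\le C_2$, both of which I have checked are correct. Your route buys a bound in the Euclidean norm (matching the lemma as literally stated, whereas the paper's proof really bounds $\|BRB'\|_V$) and an intermediate estimate that is sharper than the stated one; the paper's route is shorter and makes $C_1,C_2$ appear without any auxiliary comparison. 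One small omission: before invoking the rearrangement from Lemma~\ref{lem:lower_bound_R} you must know that $R$ exists and that $B$ is Hurwitz under the hypothesis $h<2/C_2$; the paper handles this by noting $\|V^{-1/2}(I-S^2)V^{-1/2}\|=\|V^{-1/2}(I+S)V^{-1/2}\,V\,V^{-1/2}(I-S)V^{-1/2}\|\le C_2$, so that $h<2/C_2$ implies the stepsize condition~\eqref{eq:stepsize} of Lemma~\ref{lem:spectral-radius}. You should add this one line.
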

\begin{proof} 
Denote $\omega := \min \{ \lambda : \lambda \in \sigma(V^{-1})\} = 1/\|V\|$.
\begin{itemize}
\item[(i)]
Since $S$ is skew-symmetric,
\[ \langle x, B x\rangle_V = - \langle V^{-1} x, (I+S)V^{-1} x\rangle = -\langle V^{-1} x, V^{-1} x \rangle = - \| V^{-1/2} y\|^2,\]
where $y = V^{-1/2} x$.
Now $\| V^{-1/2} y\|\geq (1/\|V\|^{1/2}) \| y \|$, so we conclude
\[ \langle x, Bx \rangle_V \leq - \| y\|^2 /\|V\|  = - \| x\|_V^2/\|V\|.\]
 \item[(ii)] This follows immediately from (i) and the fact that a dissipative operator (here: $B + \frac 1 {\|V\|} I$) generates a contraction semigroup \cite[Section IX.8]{Yosida1980}.
 \item[(iii)]
 First note that 
\[ \| V^{-1/2} (I-S^2) V^{-1/2} \| = \| V^{-1/2} (I+S) V^{-1/2} V V^{-1/2} (I-S) V^{-1/2} \| \leq C_2.\] Therefore if $h < \frac 2 {C_2}$, then~\eqref{eq:stepsize} holds, and therefore $R$ is well-defined.
From the proof of Lemma~\ref{lem:lower_bound_R}, $R = \sigma^2 V + h \int_0^{\infty} e^{Bs} B R B' e^{B's} \ d s$. Hence, using (ii),
\begin{align*}
 \| B R B' \|_V & =  \left \| B \left(\sigma^2 V +  h \int_0^{\infty} e^{ Bs} B R B' e^{B's} \ d s \right) B' \right\|_V \\
 & \leq \sigma^2 \| B V B' \|_V + h \| B \|^2_V  \left( \int_0^{\infty} \| e^{Bs}\|^2_V \ d s\right)  \|BRB'\|_V \\
 & \leq  \sigma^2 \| B V B'\|_V + \half h  \| B \|^2_V \|V\|   \|BRB'\|_V.
\end{align*}
The result follows after rearranging, using the following equality (which holds for any matrix $K$) to express $\|B\|_V$ in terms of the $\|\cdot\|$-norm:
\[ \| K\|_V = \sup_{x \neq 0} \frac{ \| V^{-1/2} K x\|}{ \| V^{-1/2} x\|} = \sup_{y \neq 0} \frac{ \| V^{-1/2} K V^{1/2} y\|}{\|y\|}  = \| V^{-1/2} K V^{1/2} \|.\]
\item[(iv)]
By (iii), using that $\mathfrak{r}(BRB') \leq ||| B R B'|||$ for \emph{any} matrix-norm $|||\cdot|||$,
\[ BRB' \preceq \left(\frac { 2 \sigma^2 C_1}{2 - h C_2}\right) I.\]
 Hence
\begin{align*} R & = \sigma^2 V + h  \int_0^{\infty} e^{Bs}B RB'  e^{B's} d s \,  \preceq \sigma^2 V+ \left(\frac { 2  h \sigma^2  C_1}{2  - h C_2}\right) \int_0^{\infty} e^{ Bs} e^{  B' s} \ d s \\
& = \sigma^2 \left(1 + \frac{ h  C_1}{2  - h C_2} \right)V,\end{align*}
which is equivalent to the stated result.
\end{itemize}
\end{proof}

\begin{proof}[Proof of Theorem~\ref{thm:ornstein_uhlenbeck}]

The density $f$ is given by
\begin{equation} \label{eq:f} f(x,y) = (2\pi)^{-n} (\det M)^{-1/2} \exp \left( - \half \begin{pmatrix} x  \\ y \end{pmatrix}' M^{-1} \begin{pmatrix} x \\y \end{pmatrix} \right).\end{equation}
Using~\eqref{eq:discrete_lyapunov}, it can be verified that 
\begin{equation} \label{eq:M_inv} M^{-1} = \begin{pmatrix} R^{-1} + \frac 1 {2h \sigma^2} (I +h  B')(I+h  B) & -\frac 1 {2 h \sigma^2} ( I+h  B)'\\-\frac 1{2 h \sigma^2}(I+h  B) & \frac 1 {2 h \sigma^2} I \end{pmatrix}.\end{equation}
We have the following expressions for the target density $\pi$ and the transition density $q$ with respect to Lebesgue measure:
\begin{align*} \pi(x) & = \frac 1{ (2 \pi)^{n/2} \det(V)^{1/2}}  \exp(- \half \langle x, V^{-1} x \rangle), \quad \mbox{and} \\
q(x,y) & = \frac{1}{(2 \pi)^{n/2} (2 h \sigma^2)^{n/2}} \exp (-|y - (I+ h  B)x|^2/(4 h \sigma^2)). 
\end{align*}
Multiplication gives
\begin{align*}
 \pi(x) q(x,y) =\frac 1 { (2 \pi)^n \det(V)^{1/2} (2 h \sigma^2)^{n/2}}\exp\left( -\half \xi' N^{-1} \xi \right),
\end{align*}
where
\begin{equation} \label{eq:N_inv} \xi = \begin{pmatrix} x \\ y \end{pmatrix} \quad \mbox{and} \quad N^{-1} = \begin{pmatrix} V^{-1} + \frac 1 {2h \sigma^2} (I+h  B')(I+h  B) & -\frac 1 {2h \sigma^2 } (I+ h   B') \\
             -\frac 1{2h \sigma^2} (I+h  B) & \frac 1 {2h \sigma^2} I \end{pmatrix}.\end{equation}
To satisfy~\eqref{eq:nonnegativity_condition}, we require that $c \gamma(y,x) + \pi(x) q(x,y) \geq 0$ for all $x, y \in \R^n$ and some constant $c > 0$. 
We compute
\begin{align*}
 & \pi(x) q(x,y)  + c \gamma(y,x)  =  \pi(x) q(x,y)  + c (f(y,x) - f(x,y) ) \geq  \pi(x) q(x,y) - c f(x,y) \\
 & = (2 \pi)^{-n} (\det V)^{-1/2} (2h \sigma^2)^{-n/2} \exp \left( -\half \xi' N^{-1} \xi \right) \\
 & \quad \quad - c (2 \pi)^{-n} \det (M)^{-1/2} \exp \left( - \half \xi' M^{-1} \xi \right).
\end{align*}
By Lemmas~\ref{lem:det_M} and~\ref{lem:lower_bound_R}, we have 
\[ \det (M)^{-1/2} = \frac{1}{(2h \sigma^2)^{n/2}} \det(R)^{-1/2} \leq \left( \frac{1}{(2h)^{n/2} (\sigma^2)^n (\det V)^{1/2}} \right),\] so that, for $c \leq \sigma^n$,
\begin{align*} c \gamma(y,x) + \pi(x) q(x,y) &  \geq  k \left[ \exp \left(-\half \xi' N^{-1} \xi \right) - \exp \left(-\half  \xi' M^{-1} \xi \right) \right] \\
& =k  \exp \left( - \half \xi' M^{-1} \xi \right) \left[ \exp \left( \half \xi' M^{-1} \xi -\half \xi' N^{-1} \xi  \right) - 1 \right], \end{align*} 
where $k = \left( (2 \pi)^{2n} (2h \sigma^2)^n \det V \right)^{-1/2}$.
The last factor is nonnegative for all $x,y$ if and only if $M^{-1} - N^{-1}$ is positive semidefinite. By~\eqref{eq:M_inv} and~\eqref{eq:N_inv}, we have
\[ M^{-1} - N^{-1} = \begin{pmatrix} R^{-1} -V^{-1} & 0 \\ 0 & 0 \end{pmatrix}.\]
Using Lemma~\ref{lem:R_estimates} (iv), we find that for the specified values of $h$ and $\sigma$, $R \preceq V$ and therefore (by \cite[Corollary 7.7.4]{HornJohnson1990}), $R^{-1} - V^{-1} \succeq 0$.
\end{proof}


\newcommand{\etalchar}[1]{$^{#1}$}

\end{document}